\begin{document}
\newtheorem{theorem}{Theorem}[section]
\newtheorem{lemma}[theorem]{Lemma}
\newtheorem{corollary}[theorem]{Corollary}
\newtheorem{prop}[theorem]{Proposition}
\newtheorem{definition}[theorem]{Definition}
\newtheorem{remark}[theorem]{Remark}

 \def\ad#1{\begin{aligned}#1\end{aligned}}  \def\b#1{{\mathbf #1}} \def\hb#1{\hat{\bf #1}}
\def\a#1{\begin{align*}#1\end{align*}} \def\an#1{\begin{align}#1\end{align}}
\def\e#1{\begin{equation}#1\end{equation}} \def\t#1{\hbox{\rm{#1}}}
\def\dt#1{\left|\begin{matrix}#1\end{matrix}\right|}\def\d{\t{div}\,}
\def\p#1{\begin{pmatrix}#1\end{pmatrix}}  
  \numberwithin{equation}{section} 
\def\boxit#1{\vbox{\hrule height1pt \hbox{\vrule width1pt\kern1pt
     #1\kern1pt\vrule width1pt}\hrule height1pt }}  
 
\title  [Superconvergent quadratic tetrahedral element]
   {Superconvergent quadriatic finite element on uniform tetrahedral meshes}

\author{ Yunqing Huang }
\address{ School of Mathematics and Computational Science, Xiangtan University, Xiangtan, Hunan, 411105,
   China.}
\email{huangyq@xtu.edu.cn}
\thanks{Yunqing Huang was supported in part by National Natural Science Foundation of China Project (11971410) and China’s National Key R\&D Programs (2020YFA0713500).}

\author { Shangyou Zhang }
\address{Department of Mathematical  Sciences, University of Delaware, Newark, DE 19716, USA}
\email{szhang@udel.edu}

\date{}

\subjclass{ 65N15, 65N30}

\keywords{conforming finite element, superconvergence, 
  quadratic finite element, Poisson's equation, tetrahedral mesh.}

\begin{abstract} By a direct computation,  we show that the $P_2$ interpolation of a $P_3$
  function is also a local $H^1$-projection on uniform tetrahedral meshes, i.e.,
  the difference is $H^1$-orthogonal to the  $P_2$ Lagrange basis function on the
   support patch of tetrahedra of the basis function.
Consequently, we show the $H^1$ and $L^2$ superconvergence of the $P_2$ Lagrange finite element
   on uniform tetrahedral meshes.
Using the standard 20-points Lagrange $P_3$ interpolation,  where the 20 nodes are
  exactly some $P_2$ global basis nodes, we lift the superconvergent $P_2$ finite
   element solution to a quasi-optimal $P_3$ solution on each cube. 
Numerical results confirm the theory. 
\end{abstract}

\maketitle

\section{Introduction}

We solve the 3D Poisson equation on uniform tetrahedral meshes by the quadratic 
  Lagrange finite element:  
  \an{\label{e1-1} -  \Delta u  &= f \qquad  \hbox{in } \Omega,  \\
       \label{e1-2}     u  &= 0 \qquad \hbox{on } \partial\Omega,  }
  where the polyhedral domain $\Omega$ can be subdivided into uniform 
     (i.e. every tetrahedron is one of the six cut from a cube of same size $h$)
   tetrahedral meshes $\mathcal T_h$,     
     shown in Figure \ref{p3n}.  
The variational form of \eqref{e1-1}--\eqref{e1-2} reads:  
    Find $u\in H_0^1(\Omega) $   such that 
\a{ \ad{  (\nabla u, \nabla v)  &=( f, v) \qquad  \forall  v \in H_0^1(\Omega). }
   }   
The quadratic Lagrange finite element space is defined by
\an{\label{V-h} V_h=\{ v_h\in H_0^1(\Omega) :
   v_h|_{T}\in P_2(T), \ T\in \mathcal T_h\}.  }
The finite element discretization of \eqref{e1-1}--\eqref{e1-2} reads: Find $u\in V_h$
   such that 
\an{ \label{finite}  (\nabla u_h, \nabla v_h)  &=( f, v_h) \qquad  \forall  v_h \in V_h. }

   By the standard finite element theory,  we have the optimal order error estimates that
\a{      |u-u_h|_1 &\le Ch^2 |u|_3, \\ 
          \|u-u_h\|_0 &\le Ch^3 |u|_3.  }
Its $H^1$ superconvergence is proved in \cite{Brandts} that
\a{              |I_h u-u_h|_1 &\le Ch^3 |u|_4, }
  where $I_h$ is the $P_2$ Lagrange interpolation operator.
Its $L^\infty$ and $L^2$ superconvergence is proved in \cite{Liu} that
\a{  \|I_h u-u_h\|_{0,\infty}  &\le Ch^4 |\ln h|^2 C(u), \\   
        \|I_h u-u_h\|_0        &\le Ch^4 C(u),   }
  where $C(u)$ is some very high order norm of $u$.
Based on a weak orthogonality that
\an{\label{w-o} |(\nabla(u-I_h u), \Pi_h^1 v)| \le C h^4 \|u\|_4 \|v\|_2,
     \quad u,v\in H^1_0(\Omega),}  
   where $\Pi_h^1$ the $P_1$ Lagrange interpolation operator,
  the $L^2$ superconvergence is also proved in \cite{Yang}.
  
 \begin{figure}[H] 
  \setlength{\unitlength}{0.8pt}

 \begin{center}\begin{picture}(300.,200 )(-80.,-20.)

 \put(0,0){\begin{picture}(120.,120)(  0.,  0.)   
 
    \multiput(0,20)(4,0){20}{\circle*{1}} \multiput(80,20)(4,-2){10}{\circle*{1}} 
     \multiput(80,20)(0,4){20}{\circle*{1}}  \multiput(80,20)(-3,3){26}{\circle*{1}} 
      \multiput(80,20)(2,3){20}{\circle*{1}} \multiput(80,20)(-4,-2){10}{\circle*{1}}
      \multiput(80,20)(-2,3){20}{\circle*{1}}
      
      \put(60,50){\circle*{4}} \multiput(-40,20)(40,0){4}{\circle*{4}}
      \multiput(20,10)(40,0){3}{\circle*{4}}\multiput(80, 0)(40,0){2}{\circle*{4}}
      \put(140,-10){\circle*{4}}  \put(100,50){\circle*{4}} \put(120,40){\circle*{4}}
      \multiput(40,100)(40,0){2}{\circle*{4}}  \put(100,90){\circle*{4}}
        \put(80,140){\circle*{4}}
        \multiput(0,60)(40,0){3}{\circle*{4}}
      \put(0, 20){\line(-1, 0){80}}  \put(120,0){\line(2,-1){40}} \put(80,100){\line(0,1){80}}
      
     \put(0,100){\line(1,0){80}}\put(0,100){\line(0,-1){80}}\put(0,100){\line(2,-1){40}}
     \put(40, 0){\line(-2, 1){40}}\put(40, 0){\line(0, 1){80}}\put(40, 0){\line(1,0){80}}
     \put(120,80){\line(-2, 1){40}}\put(120, 80){\line(0,-1){80}}\put(120, 80){\line(-1,0){80}}
     \put(40,80){\line(2, 1){40}}\put(40, 80){\line(-2,-3){40}}\put(40, 80){\line( 1,-1){80}}
     
       \end{picture} }

 \end{picture}\end{center}

\caption{Using the standard 20 $P_3$ Lagrange nodes, which are exactly 
    some existing $P_2$ finite basis nodes,
   to lift a superconvergent $P_2$ finite element solution to a
   quasi-optimal $P_3$ solution. }
\label{p3n}
\end{figure}
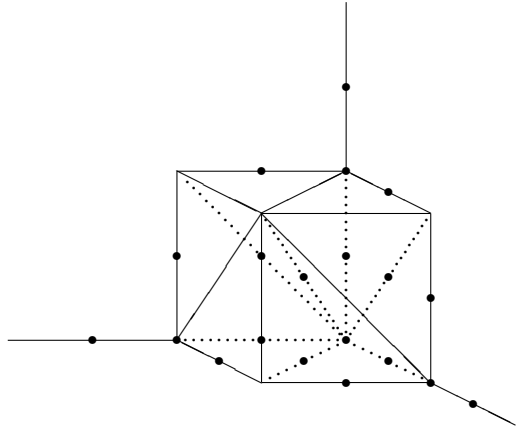 
 
All above works are based on the finite element approximation of the Green's function $G_z$.
But the finite element $H^1$-projection(approximation) of Green's functions
   itself is not computable.
As Green's functions are singular, such error estimates may lead to extra $\ln$-terms or
  may not result in the optimal bound on the right hand side.
We revisit this problem in this work, 
   intending to prove the optimal superconvergent bounds, 
\an{ \label{s1-1}     |I_h u-u_h\|_{1}  &\le Ch^4 |u|_4, \\   
     \label{s1-2}    \|I_h u-u_h\|_0        &\le Ch^4|u|_4, }
where $u$ is the solution of \eqref{e1-1}--\eqref{e1-2}, $u_h$ is the
  solution of \eqref{finite} and $I_h$ is the $P_2$ Lagrange interpolation operator
     on mesh $\mathcal T_h$.

By a direct computation,  we show that the $P_2$ interpolation of a $P_3$
  function is also a local $H^1$-projection on uniform tetrahedral meshes, i.e.,
\a{ (\nabla(u-I_h u), \phi_i)=0 \qquad \t{if } \ u\in P_3(R_i), }
where $\phi_i$ is a global basis of $P_2$ finite element space $V_h$,
  and $R_i=\cup_{\phi_i|_T\ne 0} T$ is a patch of tetrahedra. 
Based on it, we prove the optimal form of \eqref{w-o} above that 
\a{ |(\nabla(u-I_h u), v_h)| \le C h^4 |u|_4 \|v_h|_1,
     \quad u \in H^4(\Omega),\ v\in V_h. }   
Consequently, we show the $H^1$ and $L^2$ superconvergence of the $P_2$ Lagrange finite element
   on uniform tetrahedral meshes, \eqref{s1-1}--\eqref{s1-2}.
   
Using the standard 20-points Lagrange $P_3$ interpolation,  where the 20 nodes are
  exactly some $P_2$ global basis nodes, we lift the superconvergent $P_2$ finite
   element solution to a quasi-optimal $P_3$ solution on each cube. 
Such a lift has almost no computational cost, but it provides quasi-optimal $P_3$ solutions.
Numerical tests are given, showing the superconvergence of $P_2$ finite element solutions and
  the quasi-optimality of lifted $P_3$ solutions.
  
We list some references on 2D $P_2$ superconvergence on uniform triangular meshes,
  \cite{Andreev,Andreev-Lazarov,Bank,Goodsell,Goodsell2,Huang}.

\section{Local $H^1$-orthogonality of the $P_2$ interpolation}
  
We study such local $H^1$-orthogonality of the $P_2$ interpolation of $P_3$ functions in
  this section.
 
\begin{lemma}\label{l-d} 
Let $p\in P_3(R_1)$,   where $R_1$ is the support patch of tetrahedra of
   mid-cube node basis function $\phi_1$, cf. Figure \ref{3r1}.
    Then the $P_2$ nodal interpolation of $p$, $I_2 p$, is the local $H^1$-projection of $p$,
     i.e., 
\an{\label{d-3-1} (\nabla(p-I_2 p), \nabla \phi_1)_{R_1} = 0.  }  
 \end{lemma}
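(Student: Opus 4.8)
The plan is to prove this by an explicit, finite, coordinate-based computation, exploiting the fact that the statement is purely local and the uniform mesh has only finitely many tetrahedron shapes up to translation.
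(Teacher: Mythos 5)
Your plan identifies the right strategy --- and it is exactly the strategy the paper uses --- but as written it contains no proof content: the entire substance of the lemma is the computation itself, and you have not carried out any of it. The identity \eqref{d-3-1} is not a soft consequence of locality or of the finiteness of tetrahedron shapes; it is a nontrivial cancellation among six per-tetrahedron integrals that happens to hold for this particular patch and basis function, and one has no way of knowing it is true without actually evaluating those integrals. A referee cannot accept ``this is a finite computation'' as a substitute for the computation when the claimed output of the computation (namely, zero) is the whole theorem.

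Concretely, three ingredients are missing. First, the reduction by linearity: since $I_2$ reproduces $P_2$ exactly, $p-I_2p=0$ for $p\in P_2(R_1)$, so it suffices to check $p$ ranging over the cubic monomials $x^3, x^2y, \dots, xyz$. Second, the symmetry reduction: the patch $R_1$ (after shifting and scaling to a reference position) is invariant under the permutations of $x,y,z$ that also permute the six tetrahedra and fix $\phi_1$, so only three representative monomials $p=x^3$, $p=x^2y$, $p=xyz$ need to be checked. Third, and essentially, the evaluation: for each representative one must write $\phi_1$ and $\nabla\phi_1$ explicitly on each of the six tetrahedra $T_1,\dots,T_6$, compute $p-I_2p$ and $\nabla(p-I_2p)$ on each, and integrate; the paper finds, e.g. for $p=x^3$, the six contributions $-\tfrac1{30},0,\tfrac1{30},\tfrac1{30},0,-\tfrac1{30}$, which cancel in pairs. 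Until you exhibit these (or equivalent) numbers, you have a proof outline, not a proof.
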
 

\begin{proof}  We shift and isotropic scale the patch  $R_1$ to
  the one  shown in Figure \ref{3r1}.
  
 \begin{figure}[H] 
  \setlength{\unitlength}{1.2pt}

 \begin{center}\begin{picture}(120.,100 )(-10., 0.)

 \put(0,0){\begin{picture}(120.,120)(  0.,  0.)  
  \put(-31,20){$(1,0,0)$} \put(123,75){$(0,1,1)$} 
    \multiput(0,20)(4,0){20}{\circle*{1}} \multiput(80,20)(4,-2){10}{\circle*{1}} 
     \multiput(80,20)(0,4){20}{\circle*{1}}  \multiput(80,20)(-3,3){26}{\circle*{1}} 
      \multiput(80,20)(2,3){20}{\circle*{1}} \multiput(80,20)(-4,-2){10}{\circle*{1}}
      \multiput(80,20)(-2,3){20}{\circle*{1}}
      \put(60,50){\circle*{4}}
      
      \put(-20,6){$T_1$}\put(-10,10){\vector(1,0){28}}
      \put(-30,56){$T_6$}\put(-20,60){\vector(1,0){20}}
      \put(143,36){$T_3$}\put(140,40){\vector(-1,0){20}}
      \put(127,86){$T_4$}\put(125,90){\vector(-1,0){23}}
      
     \put(0,100){\line(1,0){80}}\put(0,100){\line(0,-1){80}}\put(0,100){\line(2,-1){40}}
     \put(40, 0){\line(-2, 1){40}}\put(40, 0){\line(0, 1){80}}\put(40, 0){\line(1,0){80}}
     \put(120,80){\line(-2, 1){40}}\put(120, 80){\line(0,-1){80}}\put(120, 80){\line(-1,0){80}}
     \put(40,80){\line(2, 1){40}}\put(40, 80){\line(-2,-3){40}}\put(40, 80){\line( 1,-1){80}}
     
       \end{picture} }

 \end{picture}\end{center}

\caption{The support patch $R_1$ of tetrahedra of the $P_2$ Lagrange nodal basis $\phi_1$ at
   a mid-cube node. }
\label{3r1}
\end{figure}
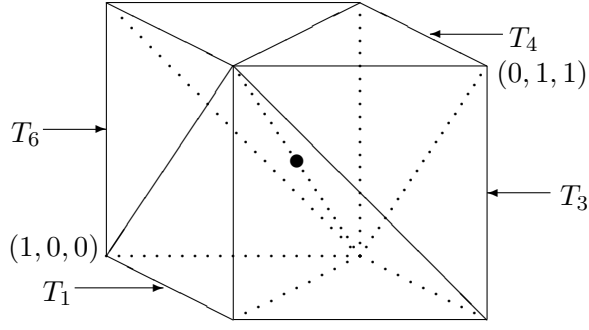

On $R_1$, the only internal basis function $\phi_1$ and its gradient are 
\an{\label{phi-1} 
   & \phi_1=\begin{cases} -4z(x-1) &\t{on} \ T_1, \\
                -4z(y-1)&\t{on} \ T_2, \\
                -4x(y-1)&\t{on} \ T_3, \\
                -4x(z-1)&\t{on} \ T_4, \\
                -4y(z-1)&\t{on} \ T_5, \\
                -4y(x-1)&\t{on} \ T_6, \end{cases} }
and
  \an{\label{phi-1d} &\ad{ \nabla\phi_1|_{T_1}&= \p{-4z \\ 0\\ -4x+4  },
       &   \nabla\phi_1|_{T_2}&= \p{ 0 \\-4z \\ -4y+4 }, \\
      \nabla\phi_1|_{T_3}&=  \p{-4y+4\\ -4x \\0  },
       &   \nabla\phi_1|_{T_4}&=  \p{ -4z + 4 \\ 0 \\ -4x  }, \\
      \nabla\phi_1|_{T_5}&= \p{ 0 \\ -4z+4 \\ -4y  },
       &   \nabla\phi_1|_{T_4}&=  \p{-4y \\ -4x + 4 \\ 0 }. } }

On $R_1$, we need to show three cases that $p=x^3$, $p=x^2y$ and $p=xyz$, 
   due to $xyz$-symmetry of $R_1$.
 For $p=x^3$,  on the six tetrahedra $T_i$,  cf. Figure \ref{3r1}, we have
 \an{\label{1-p} \ad{ p-I_2 p &= \frac 12 x(x-1)(2x-1), \ \t{on} \ T_i, i=1,\dots, 6, \\
    \nabla(p-I_2p) &= 
        \p{\frac 12-3x+3x^2\\0\\0 }, \ \t{on} \ T_i, i=1,\dots, 6. } }
 By \eqref{phi-1},  \eqref{phi-1d} and  \eqref{1-p}, we find the integral on the
   six tetrahedra $T_i$, 
\a{  (\nabla p- \nabla I_2 p, \nabla \phi_1)_{R_1} 
                    &=-\frac 1{30}+0+\frac 1{30}+\frac 1{30}+0-\frac 1{30} =0.  }
                    
For $p=x^2y$,   on $R_1$, we have 
 \an{\label{1-p2} \ad{ p-I_2 p &=\begin{cases}
      y(x-1)(2x-1)/2, & \t{on} \ T_i, i=1,5, 6, \\
      x(y-1)(2x-1)/2, & \t{on} \ T_i, i=2,3,4, \end{cases} \\ 
    \nabla(p-I_2p) &= \begin{cases}
      \p{-\frac 32 y+ 2x y \\-\frac 32 x+\frac 12 + x^2 \\0 }, & \t{on} \ T_i, i=1,5, 6, \\
      \p{ \frac12-2x -\frac 12 y+ 2x y \\-\frac 12 x+ x^2 \\0 }, & \t{on} \ T_i, i=2,3,4. \end{cases} } }
By \eqref{phi-1},  \eqref{phi-1d} and  \eqref{1-p2}, we find the integral on the
   six tetrahedra $T_i$ of $R_1$, 
\a{   (\nabla (p- I_2 p), \nabla \phi_1)_{R_1}  
    =-\frac 1{30}-\frac 1{60}+\frac 1{45}+\frac 1{30}+\frac 1{60}-\frac 1{45}
                     =0.  }
                     
For $p=xyz$,     on $R_1$, we have 
 \an{\label{1-p3} \ad{ p-I_2 p &=\begin{cases}
      z(x-1)(2y-1)/2, & \t{on} \ T_1, \\
                   z(y-1)(2x-1)/2, & \t{on} \ T_2, \\
                   x(y-1)(2z-1)/2, & \t{on} \ T_3, \\
                   x(z-1)(2y-1)/2, & \t{on} \ T_4, \\
                   y(z-1)(2x-1)/2, & \t{on} \ T_5, \\
                   y(x-1)(2z-1)/2, & \t{on} \ T_6, \end{cases} } }
and $\nabla(p-I_2p)=$
\an{\label{1-p3d} \begin{cases}
    \p{-\frac 12 z+ y z  
               \\xz-z 
                \\ -\frac 12 x+\frac 12 -y + x y   }, &T_1, \\
     \p{y z-z
              \\-\frac 12 z+x z 
               \\ -\frac 12 y + \frac 12 - x + x y  }, &T_2, \\
     \p{ - \frac 1 2  y +  \frac 1 2  - z + y z
               \\- \frac 1 2  x + x z
                \\ x y - x }, &T_3, \end{cases} &&  \begin{cases}
    \p{- \frac 1 2  z +  \frac 1 2  - y + y z
            \\ x z - x
             \\ - \frac 1 2  x + x y   }, &T_4, \\
     \p{ y z - y
          \\ - \frac 1 2  z +  \frac 1 2  - x + x z
           \\ - \frac 1 2  y + x y   }, &T_5, \\
     \p{- \frac 1 2  y + y z
             \\- \frac 1 2  x +  \frac 1 2  - z + x z
              \\x y - y   }, &T_6. \end{cases} } 
By \eqref{phi-1},  \eqref{phi-1d},  \eqref{1-p3} and  \eqref{1-p3d}, 
    we find the integral on the six tetrahedra $T_i$ of $R_1$, 
\a{  (\nabla (p- I_2 p), \nabla \phi_1)_{R_1}  
    =-\frac 1{45}-\frac 1{90}+\frac 1{90}+\frac 1{45}+\frac 1{90}-\frac 1{90}
                     =0.  } 
                     
  The proof is complete.
\end{proof}

\begin{lemma} 
Let $p\in P_3(R_1)$,   where $R_1$ is the support patch of tetrahedra of
   mid-square node basis function $\phi_2$, cf. Figure \ref{3r2}.
    Then the $P_2$ nodal interpolation of $p$, $I_2 p$, is the local $H^1$-projection of $p$,
     i.e., 
\an{\label{d-3-2} (\nabla(p-I_2 p), \nabla \phi_2)_{R_2} = 0.  }  
 \end{lemma}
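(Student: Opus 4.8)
The plan is to follow the template of Lemma~\ref{l-d}. After a shift and isotropic scaling I place the mid-square node at the centre of the square face shared by two adjacent unit cubes, so that $\phi_2$ is the $P_2$ basis function at the midpoint of the shared face-diagonal. The support patch $R_2$ is then the set of tetrahedra containing that diagonal edge, namely the two tetrahedra cut from each of the two cubes, four in all, cf. Figure~\ref{3r2}. As in \eqref{phi-1}--\eqref{phi-1d} I would first tabulate the piecewise form of $\phi_2$ and the constant gradient $\nabla\phi_2$ on each of these four tetrahedra.

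Since $I_2$ reproduces $P_2$, the functional $F(p):=(\nabla(p-I_2p),\nabla\phi_2)_{R_2}$ vanishes on $P_2(R_2)$, so I only need to test it on the ten cubic monomials. Here I exploit the symmetries of the patch. Both the mirror symmetry $\tau\colon(x,y,z)\mapsto(y,x,z)$ through the plane $x=y$ and the point reflection $\iota\colon(x,y,z)\mapsto(1-x,1-y,-z)$ about the mid-square node map $R_2$ onto itself, permute the $P_2$ interpolation nodes, and fix $\phi_2$; hence a change of variables gives $F(p\circ\tau)=F(p)$ and $F(p\circ\iota)=F(p)$. The reflection $\tau$ pairs $x^3\leftrightarrow y^3$, $x^2y\leftrightarrow xy^2$, $x^2z\leftrightarrow y^2z$ and $z^2x\leftrightarrow z^2y$, leaving at most the representatives $x^3,\,z^3,\,x^2y,\,x^2z,\,z^2x,\,xyz$.

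The point reflection then finishes the argument cleanly: the linear part of $\iota$ is $-I$, so for any cubic monomial $p$ one has $p\circ\iota=-p+q$ with $q\in P_2$, and therefore $F(p)=F(p\circ\iota)=-F(p)$, forcing $F(p)=0$. Equivalently, one may verify each representative by the explicit computation of Lemma~\ref{l-d}: write $p-I_2p$ as the cubic that vanishes at the ten $P_2$ nodes of each tetrahedron, differentiate, dot with the tabulated $\nabla\phi_2$, and integrate over the four tetrahedra; the four contributions cancel in pairs, precisely because $\iota$ interchanges the tetrahedra of $R_2$ with opposite-signed integrands.

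The main obstacle is geometric bookkeeping rather than analysis. Because $R_2$ straddles the interface between two cubes, I must set up the four tetrahedra and the piecewise quadratic $\phi_2$ consistently across the shared face and check that the interpolant $I_2p$ of the single polynomial $p\in P_3(R_2)$ is indeed continuous there, so that $\nabla(p-I_2p)$ is the correct piecewise-polynomial integrand. Once the reference patch and $\nabla\phi_2$ are correctly tabulated, the remainder is the same finite, explicit integration as in Lemma~\ref{l-d}, shortened by the symmetry reductions above.
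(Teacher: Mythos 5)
Your proposal is correct, but it reaches \eqref{d-3-2} by a genuinely different route from the paper. The paper's proof is the same brute-force computation as in Lemma \ref{l-d}: it tabulates $\phi_2$ and $\nabla\phi_2$ on the four tetrahedra of $R_2$, reduces the ten cubic monomials to six representatives (in its coordinates these are $x^3,x^2y,xy^2,y^3,y^2z,xyz$, via a $y$--$z$ symmetry rather than your $x$--$y$ mirror), and for each one writes out $p-I_2p$ and its gradient on every $T_i$ and integrates, observing that the four per-tetrahedron values cancel in pairs (e.g.\ $-\tfrac1{30}-\tfrac1{30}+\tfrac1{30}+\tfrac1{30}=0$). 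Your point-reflection argument short-circuits all of this: since the linear part of $\iota$ is $-I$, every cubic monomial is odd modulo $P_2$ about the node, so $F(p)=F(p\circ\iota)=-F(p)$ kills all ten monomials at once with no integration; the paper's explicitly computed pairwise cancellations are exactly the fingerprint of this symmetry. Moreover the identical argument applies verbatim to the mid-cube, mid-edge and vertex patches $R_1,R_3,R_4$, so it would replace all four lemmas of Section 2 by one. The one substantive point you assert but must actually verify --- it is the crux of the whole proof --- is that $\iota$ maps the uniform mesh onto itself near the node, permuting the tetrahedra of $R_2$ and the $P_2$ nodes. For the subdivision used here (six tetrahedra per cube sharing the main diagonal) this does hold: point reflection about a cube's centre reverses the six monotone vertex paths along the diagonal and hence permutes the six tetrahedra, and the reflection about a face centre, edge midpoint or vertex differs from that one by an integer translation, under which the mesh is invariant. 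With that check supplied, your proof is complete and cleaner than the paper's; your fallback of explicit per-monomial integration is precisely what the paper carries out.
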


\begin{proof} We shift and isotropic scale the patch  $R_2$ to
  the one  shown in Figure \ref{3r2}.

 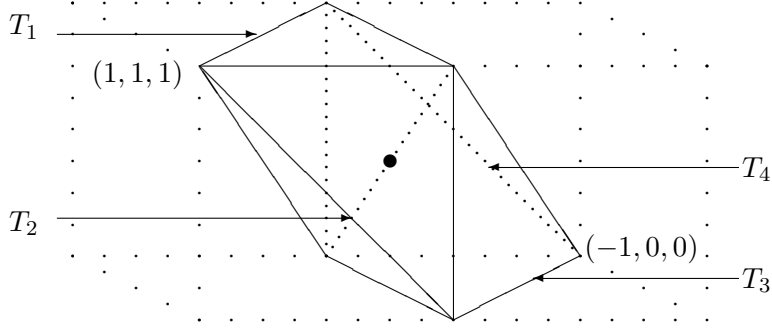
\begin{figure}[H] 
  \setlength{\unitlength}{1.2pt}

 \begin{center}\begin{picture}(220.,100 )(-10., 0)

 \put(0,0){\begin{picture}(220.,120)(  0.,  0.)  
  \put(6,75){$(1,1,1)$} \put(161.3,20){$(-1,0,0)$} 
    \multiput(0,20)(10,0){16}{\circle*{1}} \put(80,20){\line(2,-1){40}}
    
      \multiput(160,20)(0, 10){9}{\circle*{1}} \multiput(160,20)(10, -5){4}{\circle*{1}} 
      \multiput(200,80)(0,-10){9}{\circle*{1}} 
      \multiput(200,80)(-10,0){9}{\circle*{1}} \multiput(200,80)(-10,5){4}{\circle*{1}}
     \multiput(80,20)(0,4){20}{\circle*{1}}  
      \multiput(80,20)(2,3){20}{\circle*{1}} \put(80,20){\line(-2,3){40}}
      \multiput(80,100)(2.5,-2.5){32}{\circle*{1}} 
      \put(100,50){\circle*{4}}
      \put(120,0){\line(2,1){40}}
      \put(120,80){\line(2,-3){40}}
      
      \put(-20,28){$T_2$}\put(-5,32){\vector(1,0){93 }}
      \put(-20,90){$T_1$}\put(-5,90){\vector(1,0){63}}
      \put(211,10){$T_3$}\put(210,13){\vector(-1,0){65}}
      \put(211,45){$T_4$}\put(210,48){\vector(-1,0){79}}
      
     \multiput(0,100)(10,0){16}{\circle*{1}}
     \multiput(0,100)(0,-10){8}{\circle*{1}}
     \multiput(0,100)(10,-5){4}{\circle*{1}}
     \multiput(0,20)(10,-5){4}{\circle*{1}}
     \multiput(40, 0)(10,0){16}{\circle*{1}}
     \multiput(40, 0)( 0,10){8}{\circle*{1}}
      
     \put(120,80){\line(-2, 1){40}}
     \put(120, 80){\line(0,-1){80}}\put(120, 80){\line(-1,0){80}}
     \put(40,80){\line(2, 1){40}} \put(40, 80){\line( 1,-1){80}} 
     
       \end{picture} }

 \end{picture}\end{center}

\caption{The support patch $R_2$ of tetrahedra of the $P_2$ Lagrange nodal basis $\phi_2$ at
   a mid-square node. }
\label{3r2}
\end{figure} 
 
 On the four tetrahedra of $R_2$,  we find
\an{\label{phi-2} \phi_2=\begin{cases}
   x (x - 1)(2 x - 1) /2 &\t{on }\ T_1, 
                    \\x (x - 1)(2 x - 1) /2 &\t{on }\ T_2, 
                    \\x (x + 1) (2 x + 1)/2 &\t{on }\ T_3, 
                    \\x (x + 1) (2 x + 1)/2 &\t{on }\ T_4, \end{cases} }
and
\an{\label{phi-2g}    \ad{
  \nabla\phi_2|_{T_1} &=  \p{  4 z - 4\\ -4 z + 4\\ 4 x - 4 y },         
      &\nabla\phi_2|_{T_3} &=   \p{  4 z \\ -4 z \\ 4 x - 4 y + 4  }, \\
       \nabla\phi_2|_{T_2} &=  \p{  4 y - 4 \\ 4 x - 4 z  \\ -4 y + 4  },      
      &\nabla\phi_2|_{T_4} &=  \p{   4 y\\ 4 x - 4 z + 4\\ -4 y  }. } }
 
On $R_2$, we need to show six cases that $p=x^3$, $p=x^2y$, $p=xy^2$,$p=y^3$, $p=y^2z$ and $p=xyz$, 
   due to a $yz$-symmetry of $R_2$.

 For $p=x^3$,  on the 4 tetrahedra $T_i$ of $R_2$,  cf. Figure \ref{3r2}, we have
\an{\label{2-p}  p-I_2 p=\begin{cases}
   x (x - 1)(2 x - 1) /2 &\t{on }\ T_1, T_2, 
   \\x (x + 1) (2 x + 1)/2 &\t{on }\ T_3,T_4, \end{cases} }
and
\an{\label{2-pd}     
  \nabla(p-I_2 p) =\begin{cases}
   \p{-3 x + 1/2 + 3 x^2 \\0\\0 } &\t{on }\ T_1, T_2, 
   \\\p{3 x^2 + 3 x + 1/2 \\0\\0 } &\t{on }\ T_3,T_4. \end{cases} }
 By \eqref{phi-2},  \eqref{phi-2g},  \eqref{2-p} and  \eqref{2-pd}, 
    we find the integral on the four tetrahedra $T_i$ of $R_2$, 
\a{  (\nabla (p- I_2 p), \nabla \phi_2)_{R_2}  
    =-\frac 1{30}-\frac 1{30}+\frac 1{30}+ \frac 1{30} =0.  }

For $p=x^2y$,  on the 4 tetrahedra $T_i$ of $R_2$,   cf. Figure \ref{3r2}, we have
\an{\label{2-p2}  p-I_2 p=\begin{cases}
   x (y - 1) (2 x - 1)/ 2 &\t{on }\ T_1, T_2, 
   \\ x y (2 x + 1)/ 2  &\t{on }\ T_3,T_4, \end{cases} }
and
\an{\label{2-p2d}     
  \nabla(p-I_2 p)=\begin{cases}
   \p{ 1/2 - 2 x -  y/2  + 2 x y \\ - x/2  + x^2 \\0 } &\t{on }\ T_1, T_2, 
   \\ \p{  y/2  + 2 x y \\  x/2  + x^2 \\0 } &\t{on }\ T_3,T_4. \end{cases} }
 By \eqref{phi-2},  \eqref{phi-2g},  \eqref{2-p2} and  \eqref{2-p2d}, 
    we find the integral on the four tetrahedra $T_i$ of $R_2$,  
\a{  (\nabla (p- I_2 p), \nabla \phi_2)_{R_2} 
   =-\frac 7{180}-\frac 1{60}+\frac 7{180}+ \frac 1{60} =0.  }
                     
For $p=x y^2$, on the 4 tetrahedra $T_i$ of $R_2$,   cf. Figure \ref{3r2}, we have
\an{\label{2-p3}  p-I_2 p=\begin{cases}
   x (y - 1) (2 y - 1)/ 2  &\t{on }\ T_1, T_2, 
   \\x y (2 y- 1)/ 2    &\t{on }\ T_3,T_4, \end{cases} }
and
\an{\label{2-p3d}     
  \nabla(p-I_2 p)=\begin{cases}
   \p{ -\frac 32 y + \frac 12 + y^2 \\ -\frac 32 x + 2 x y \\0 } &\t{on }\ T_1, T_2, 
   \\ \p{ -\frac 12 y + y^2 \\  -\frac 12 x + 2 x y\\0 } &\t{on }\ T_3,T_4. \end{cases} }
 By \eqref{phi-2},  \eqref{phi-2g},  \eqref{2-p3} and  \eqref{2-p3d}, 
    we find the integral on the four tetrahedra $T_i$ of $R_2$,  
\a{  (\nabla (p- I_2 p), \nabla \phi_2)_{R_2} 
    =-\frac 1{20}-\frac 1{180}+\frac 1{20}+ \frac 1{180} =0.  }
                     
For $p= y^3$,   on the 4 tetrahedra $T_i$ of $R_2$,   cf. Figure \ref{3r2}, we have
\an{\label{2-p4}  p-I_2 p=\frac{y \left(2 y -1\right) \left(y -1\right)}{2} 
   \ \t{on }\ T_1,T_2,T_3,T_4,  }
and
\an{\label{2-p4d}     
  \nabla(p-I_2 p)=\p{ 0 \\ \frac{1}{2}-3 y +3 y^{2} \\ 0  } 
   \ \t{on }\ T_1,T_2,T_3,T_4.  }
 By \eqref{phi-2},  \eqref{phi-2g},  \eqref{2-p4} and  \eqref{2-p4d}, 
    we find the integral on the four tetrahedra $T_i$ of $R_2$,  
\a{  (\nabla (p- I_2 p), \nabla \phi_2)_{R_2}
    =-\frac 1{60}-\frac 1{60}+\frac 1{60}+ \frac 1{60} =0.  }
              
For $p= y^2z$,  on the 4 tetrahedra $T_i$ of $R_2$,   cf. Figure \ref{3r2}, we have
\an{\label{2-p5}  p-I_2 p=\begin{cases}
    {y \left(z -1\right) \left(2 y -1\right)}/{2}  &\t{on }\ T_1, T_4, 
   \\  {z \left(2 y -1\right) \left(y -1\right)}/{2}   &\t{on }\ T_2,T_3, \end{cases} }
and
\an{\label{2-p5d}     
  \nabla(p-I_2 p)=\begin{cases}
  \p{ 0 \\ -\frac{1}{2} z +\frac{1}{2}-2 y +2 y z \\ -\frac{1}{2} y +y^{2} } &\t{on }\ T_1, T_4, 
   \\ \p{ 0 \\ -\frac{3}{2} z +2 y z  \\ -\frac{3}{2} y +\frac{1}{2}+y^{2} } &\t{on }\ T_2,T_3. \end{cases} }
 By \eqref{phi-2},  \eqref{phi-2g},  \eqref{2-p5} and  \eqref{2-p5d}, 
    we find the integral on the four tetrahedra $T_i$ of $R_2$,  
\a{  (\nabla (p- I_2 p), \nabla \phi_2)_{R_2}
     =-\frac 1{90}-\frac 1{60}+\frac 1{90}+ \frac 1{60} =0.  }

For $p= xyz$,   on the 4 tetrahedra $T_i$ of $R_2$,   cf. Figure \ref{3r2}, we have
\an{\label{2-p6}  p-I_2 p=\begin{cases} 
 {x \left(z -1\right) \left(2 y -1\right)}/{2}   &\t{on }\ T_1, \\
 {x \left(2 z -1\right) \left(y -1\right)}/{2}   &\t{on }\ T_2,\\ 
 {x z \left(2 y -1\right)}/{2}   &\t{on }\ T_3, \\ 
 {x y \left(2 z -1\right)}/{2}   &\t{on }\ T_4, \end{cases} }
and
\an{\label{2-p6d}     \ad{
  p_1'&=\p{ -\frac{1}{2} z +\frac{1}{2}-y +y z  \\ x z -x \\ -\frac{1}{2} x +x y   }, & 
 p_2' &=\p{ -\frac{1}{2} y +\frac{1}{2}-z +y z \\ -\frac{1}{2} x +x z \\ x y -x  },\\ 
  p_3' &=\p{ -\frac{1}{2} z +y z \\ x z  \\-\frac{1}{2} x +x y   }, & 
 p_4' &=\p{ -\frac{1}{2} y +y z \\ -\frac{1}{2} x +x z  \\ x y     }, } } 
where $p_i'=\nabla(p-I_2 p)|_{T_i}$, $i=1,2,3,4$.
 By \eqref{phi-2},  \eqref{phi-2g},  \eqref{2-p6} and  \eqref{2-p6d}, 
    we find the integral on the four tetrahedra $T_i$ of $R_2$,  
\a{  (\nabla (p- I_2 p), \nabla \phi_2)_{R_2}
   =-\frac 1{30}-\frac 1{36}+\frac 1{30}+ \frac 1{36} =0.  }
  
  The proof is complete.
\end{proof}

\begin{lemma} 
Let $p\in P_3(R_3)$,   where $R_3$ is the support patch of tetrahedra of
  a mid-vertical-edge node basis function $\phi_3$, cf. Figure \ref{3r3}.
    Then the $P_2$ nodal interpolation of $p$, $I_2 p$, is the local $H^1$-projection of $p$,
     i.e., 
\an{\label{d-3-3} (\nabla(p-I_2 p), \nabla \phi_3)_{R_3} = 0.  }  
 \end{lemma}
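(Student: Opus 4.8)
The plan is to follow verbatim the strategy used in Lemma \ref{l-d} and the two preceding lemmas, reducing \eqref{d-3-3} to finitely many monomial checks. Because the $P_2$ interpolation operator $I_2$ reproduces every polynomial of degree at most two exactly, the error $p-I_2 p$ kills the constant, linear and quadratic parts of $p$ and hence depends only on the cubic part of $p$. Since both $I_2$ and the bilinear form $(\nabla(\cdot-I_2(\cdot)),\nabla\phi_3)_{R_3}$ are linear in $p$, it suffices to verify \eqref{d-3-3} for each of the ten cubic monomials $x^3,y^3,z^3,x^2y,x^2z,xy^2,y^2z,xz^2,yz^2,xyz$. As a first step I would shift and isotropically scale $R_3$ to the canonical configuration of Figure \ref{3r3}, placing the midpoint of the vertical edge at the origin with the edge running along the $z$-axis.

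Next I would enumerate the tetrahedra of $R_3$. An interior vertical cube edge is shared by four cubes, and in the six-tetrahedra subdivision each of those cubes contributes exactly two tetrahedra containing the edge, so $R_3$ is the union of eight tetrahedra $T_1,\dots,T_8$. On each $T_i$ the nodal basis function $\phi_3$ restricts to a quadratic, and I would tabulate the explicit expressions for $\phi_3|_{T_i}$ together with the affine gradient fields $\nabla\phi_3|_{T_i}$, exactly in the format of \eqref{phi-1}--\eqref{phi-1d} and \eqref{phi-2}--\eqref{phi-2g}. Note that $\nabla\phi_3$ is affine rather than constant, so each per-tetrahedron integrand $\nabla(p-I_2p)\cdot\nabla\phi_3$ is a genuine cubic polynomial whose integral over $T_i$ is an exact rational number.

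The number of distinct cases is then cut down by the symmetries of the patch. With the edge along the $z$-axis, $R_3$ carries symmetries relating the four surrounding cubes and the two variables $x,y$ transverse to the edge, in the same spirit as the $xyz$-symmetry that left three cases for $R_1$ and the $yz$-symmetry that left six cases for $R_2$; these identifications reduce the ten cubic monomials to a short list of representatives. For each representative $p$ I would compute $p-I_2 p$ and $\nabla(p-I_2 p)$ piecewise on the eight tetrahedra -- these are simple one-dimensional cubic bubbles and their products, e.g.\ $\tfrac12 z(x-1)(2x-1)$ in the spirit of \eqref{1-p}, \eqref{1-p2} and \eqref{1-p3} -- then integrate against $\nabla\phi_3|_{T_i}$ and sum over $i$.

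I expect the main obstacle to be bookkeeping rather than any conceptual difficulty: one must correctly orient all eight tetrahedra, fix the sign of $\phi_3$ on each, and carry out the piecewise cubic integration without error. The crucial point is that the eight rational per-tetrahedron contributions cancel in symmetric pairs, precisely the mechanism behind the $\pm\tfrac1{30}$, $\pm\tfrac1{45}$, $\pm\tfrac1{90}$ cancellations in Lemma \ref{l-d}. Checking that the chosen symmetries really do pair the tetrahedra as claimed, and confirming the vanishing sum for every representative monomial, is where all the care is needed.
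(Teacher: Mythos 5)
Your overall strategy is the same as the paper's: by linearity and the fact that $I_2$ reproduces $P_2$, reduce \eqref{d-3-3} to the ten cubic monomials, cut these down by the symmetry of the patch (the paper uses the $xy$-symmetry of $R_3$ to leave six representatives, $x^3$, $x^2y$, $x^2z$, $z^3$, $xz^2$, $xyz$), and then verify the identity by exact piecewise integration. However, there is a concrete error in your description of the patch that would derail the computation: $R_3$ consists of \emph{six} tetrahedra, not eight. Your claim that each of the four cubes sharing an interior vertical edge contributes exactly two tetrahedra containing that edge is false for the six-tetrahedra (Kuhn/Freudenthal) subdivision used here, in which all six tetrahedra of a cube share the cube's main diagonal. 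Each such tetrahedron has exactly three cube edges among its six edges, so the twelve cube edges account for $6\times 3=18$ incidences: a vertical cube edge touching the tail or the head of the main diagonal lies in two of that cube's tetrahedra, while a vertical edge in the two remaining positions lies in only one. Of the four cubes around a vertical mesh edge, two are of the first kind and two of the second, giving $2+2+1+1=6$ tetrahedra --- exactly the list $T_1=\mathbf{x}_1\mathbf{x}_5\mathbf{x}_6\mathbf{x}_7,\dots,T_6=\mathbf{x}_1\mathbf{x}_4\mathbf{x}_5\mathbf{x}_6$ in Figure \ref{3r3}.

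Since the whole proof is an explicit evaluation of $(\nabla(p-I_2p),\nabla\phi_3)$ over precisely the support of $\phi_3$, starting from an eight-tetrahedron patch would corrupt the bookkeeping: two of your eight tetrahedra would not have the mid-edge node among their $P_2$ nodes at all (so $\phi_3$ vanishes identically there, rather than being the nonzero quadratic you would presumably write down), and the pairwise cancellation of ``eight'' contributions you anticipate would not close up as stated. The remainder of the plan --- tabulating $\phi_3|_{T_i}$ and the affine fields $\nabla\phi_3|_{T_i}$, computing $p-I_2p$ piecewise as products of one-dimensional cubic bubbles, and summing exact rational integrals that cancel in pairs (the paper gets, e.g., $\frac1{30}+0+\frac1{30}-\frac1{30}+0-\frac1{30}=0$ for $p=x^3$) --- is exactly what the paper does, so once the patch enumeration is corrected the argument goes through.
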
 

\begin{proof}  We shift and isotropic scale the patch  $R_3$ to
  the one  shown in Figure \ref{3r3}.

 \begin{figure}[H] 
  \setlength{\unitlength}{1pt}

 \begin{center}\begin{picture}(200.,120 )(-20., 0.)

 \put(0,0){\begin{picture}(200.,120)(  0.,  0.)   
    \multiput(0,20)(4,0){20}{\circle*{1}} \multiput(80,20)(4,-2){10}{\circle*{1}} 
     \multiput(80,20)(0,4){20}{\circle*{1}}  \multiput(80,20)(-3,3){26}{\circle*{1}} 
      \multiput(80,20)(2,3){20}{\circle*{1}} \multiput(80,20)(-4,-2){10}{\circle*{1}}
      \multiput(80,20)(-2,3){20}{\circle*{1}} 
       
       \put(80,60){\circle*{4}} \put(75,108){$\b x_5$} \put(75,10){$\b x_1$}
      
     \put(0,100){\line(1,0){80}}\put(0,100){\line(0,-1){80}}\put(0,100){\line(2,-1){40}}
     \put(40, 0){\line(-2, 1){40}}\put(40, 0){\line(0, 1){80}}\put(40, 0){\line(1,0){80}}
     \put(120,80){\line(-2, 1){40}}\put(120, 80){\line(0,-1){80}}\put(120, 80){\line(-1,0){80}}
     \put(40,80){\line(2, 1){40}}\put(40, 80){\line(-2,-3){40}}\put(40, 80){\line( 1,-1){80}}

 \put(-40,20){\begin{picture}(120.,120)(  0.,  0.)   
    \multiput(0,20)(4,0){20}{\circle*{1}} \multiput(80,20)(4,-2){10}{\circle*{1}} 
     \multiput(80,20)(0,4){20}{\circle*{1}}  \multiput(80,20)(-3,3){26}{\circle*{1}} 
      \multiput(80,20)(2,3){20}{\circle*{1}} \multiput(80,20)(-4,-2){10}{\circle*{1}}
      \multiput(80,20)(-2,3){20}{\circle*{1}}

     \put(0,100){\line(1,0){80}}\put(0,100){\line(0,-1){80}}\put(0,100){\line(2,-1){40}}
     \put(40, 0){\line(-2, 1){40}}\put(40, 0){\line(0, 1){80}} 
     \put(120,80){\line(-2, 1){40}} \put(120, 80){\line(-1,0){80}}
     \put(40,80){\line(2, 1){40}}\put(40, 80){\line(-2,-3){40}} 

       \end{picture} }
 \put(80,0){\begin{picture}(120.,120)(  0.,  0.)   
    \multiput(0,20)(4,0){20}{\circle*{1}} \multiput(80,20)(4,-2){10}{\circle*{1}} 
     \multiput(80,20)(0,4){20}{\circle*{1}}  \multiput(80,20)(-3,3){26}{\circle*{1}} 
      \multiput(80,20)(2,3){20}{\circle*{1}} \multiput(80,20)(-4,-2){10}{\circle*{1}}
      \multiput(80,20)(-2,3){20}{\circle*{1}}

     \put(0,100){\line(1,0){80}} \put(0,100){\line(2,-1){40}}
      \put(40, 0){\line(0, 1){80}} \put(40, 0){\line(1,0){80}}
     \put(120,80){\line(-2, 1){40}}\put(120, 80){\line(0,-1){80}}\put(120, 80){\line(-1,0){80}}
     \put(40,80){\line(2, 1){40}} \put(40, 80){\line( 1,-1){80}} 
     
       \end{picture} }

 \put(40,20){\begin{picture}(120.,120)(  0.,  0.)   
    \multiput(0,20)(4,0){20}{\circle*{1}} \multiput(80,20)(4,-2){10}{\circle*{1}} 
     \multiput(80,20)(0,4){20}{\circle*{1}}  \multiput(80,20)(-3,3){26}{\circle*{1}} 
      \multiput(80,20)(2,3){20}{\circle*{1}} \multiput(80,20)(-4,-2){10}{\circle*{1}}
      \multiput(80,20)(-2,3){20}{\circle*{1}}

     \put(0,100){\line(1,0){80}} \put(0,100){\line(2,-1){40}} 
     \put(120,80){\line(-2, 1){40}} \put(120, 80){\line(-1,0){80}}
     \put(40,80){\line(2, 1){40}}  
     
       \end{picture} }

       \end{picture} }

 \end{picture}\end{center}

\caption{The support patch $R_3$ of tetrahedra of the $P_2$ Lagrange nodal basis $\phi_3$ at
   a mid-vertical edge node, where $\b x_1(0,0,0)$, $\b x_2(-1,0,0)$, $\b x_3(-1,-1,0)$,
      $\b x_4(0,-1,0)$, $\b x_5(0,0,1)$, $\b x_6(1,0,1)$,  $\b x_7(1,1,1)$,  $\b x_8(0,1,1)$, 
        $T_1=\b x_1\b x_5\b x_6\b x_7$, $T_2=\b x_1\b x_5\b x_7\b x_8$,
         $T_3=\b x_1\b x_ 2\b x_5\b x_ 8$, $T_4=\b x_ 1\b x_2\b x_3\b x_5$,
         $T_5=\b x_ 1\b x_3\b x_4\b x_5$ and $T_6=\b x_ 1\b x_4\b x_5\b x_6$. }
\label{3r3}
\end{figure}

 On the six tetrahedra of $R_3$,  we find
\an{\label{phi-3} \phi_3=\begin{cases}
   4 \left(z -1\right) \left(x -z \right)  &\t{on }\ T_1, \\
4 \left(z -1\right) \left(y -z \right) &\t{on }\ T_2,\\
-4 \left(y -z \right) \left(x -z +1\right) &\t{on }\ T_3,\\
4 \left(x -z +1\right) z &\t{on }\ T_4, \\
4 z \left(y -z +1\right) &\t{on }\ T_5,\\
-4 \left(y -z +1\right) \left(x -z \right) &\t{on }\ T_6, \end{cases} }
and
\an{\label{phi-3g}    \ad{
  \nabla\phi_3|_{T_1} &=  \p{ 4 z -4\\ 0\\ 4 x -8 z +4 },         
      &\nabla\phi_3|_{T_2} &=  \p{     0 \\ 4 z -4 \\4 y -8 z +4 }, \\
       \nabla\phi_3|_{T_3} &=  \p{  -4 y +4 z \\ -4 x +4 z -4\\ 4 x +4 y -8 z +4 }, 
       & \nabla\phi_3|_{T_4} &= \p{  4 z \\ 0\\ 4 x -8 z +4 },  \\    
       \nabla\phi_3|_{T_5} &=   \p{ 0\\ 4 z \\ 4 y -8 z +4 }, 
    &\nabla\phi_3|_{T_6} &= \p{ -4 y +4 z -4 \\ -4 x +4 z \\ 4 x +4 y -8 z +4 }. } }

On $R_3$, we need to show three cases that $p=x^3$, $p=x^2y$, $p=x^2z$,
     $p=z^3$, $p=x z^2$ and $p=xyz$, 
   due to an $xy$-symmetry of $R_3$.

 For $p=x^3$,  on the six tetrahedra $T_i$ of $R_3$,  cf. Figure \ref{3r3},  we have
\an{\label{3-p1}  p-I_2 p=\begin{cases}
   {x \left(2 x -1\right) \left(x -1\right)}/{2}&\t{on }\ T_1, T_2, T_6,
   \\ {x \left(1+x \right) \left(2 x +1\right)}/{2} &\t{on }\ T_3,T_4,T_5, \end{cases} }
and
\an{\label{3-p1d}     
  \nabla(p-I_2 p) =\begin{cases}
   \p{-3 x +\frac{1}{2}+3 x^{2}\\0 \\ 0 } &\t{on }\ T_1, T_2, T_6,
   \\\p{ 3 x^{2}+3 x +\frac{1}{2} \\ 0 \\ 0   } &\t{on }\ T_3,T_4,T_5. \end{cases} }
 By \eqref{phi-3},  \eqref{phi-3g},  \eqref{3-p1} and  \eqref{3-p1d}, 
    we find the integral on the six tetrahedra $T_i$ of $R_3$, 
\a{  (\nabla (p- I_2 p), \nabla \phi_3)_{R_3} 
   = \frac 1{30}+0+\frac 1{30}-\frac 1{30}+0-\frac 1{30} =0.  }

For $p=x^2y$,   on the six tetrahedra $T_i$ of $R_3$,  cf. Figure \ref{3r3},  we have
\an{\label{3-p2}  p-I_2 p=\begin{cases}
  {y \left(2 x -1\right) \left(x -1\right)}/{2}&\t{on }\ T_1,\\
  {x \left(y -1\right) \left(2 x -1\right)} /{2}&\t{on }\  T_2,\\
  {x y \left(2 x +1\right)} /{2}&\t{on }\  T_3,\\
  {y \left(1+x \right) \left(2 x +1\right)} /{2}&\t{on }\ T_4,\\
  {x \left(y +1\right) \left(2 x +1\right)} /{2}&\t{on }\ T_5,\\
  {x y \left(2 x -1\right)} /{2}&\t{on }\ T_6,  \end{cases} }
and
\an{\label{3-p2d} \ad{  g_{1} &= \p{-\frac{3}{2} y +2 x y \\-\frac{3}{2} x +\frac{1}{2}+x^{2}\\ 0 }, 
  &   g_{2} &=\p{\frac{1}{2}-2 x -\frac{1}{2} y +2 x y \\ -\frac{1}{2} x +x^{2} \\ 0  },  \\
 g_{3} &= \p{ \frac{1}{2} y +2 x y \\ \frac{1}{2} x +x^{2} \\ 0  }, 
  &   g_{4} &= \p{\frac{3}{2} y +2 x y \\ \frac{3}{2} x +\frac{1}{2}+x^{2} \\ 0   },  \\
   g_{5} &= \p{ 2 x +\frac{1}{2} y +\frac{1}{2}+2 x y \\ \frac{1}{2} x +x^{2} \\ 0   }, 
  &   g_{6} &= \p{ -\frac{1}{2} y +2 x y  \\-\frac{1}{2} x +x^{2}  \\0  },  } }
  where $g_{i}=\nabla(p-I_2p)|_{T_i}$.
 By \eqref{phi-3},  \eqref{phi-3g},  \eqref{3-p2} and  \eqref{3-p2d}, 
    we find the integral on the six tetrahedra $T_i$ of $R_3$, 
\a{  (\nabla (p- I_2 p), \nabla \phi_3)_{R_3}  
  = \frac 1{30}+ \frac 1{180}+ \frac 1{60}-\frac 1{30}-\frac 1{180}-\frac 1{60} =0.  }

For $p=x^2z$,   on the six tetrahedra $T_i$ of $R_3$,  cf. Figure \ref{3r3},  we have
\an{\label{3-p3}  p-I_2 p=\begin{cases}
  {x \left(z -1\right) \left(2 x -1\right)}/{2}&\t{on }\ T_1,  T_2, T_6, \\
  {x z \left(2 x +1\right)}/{2}&\t{on }\  T_3,  T_4,  T_5,  \end{cases} }
and
\an{\label{3-p3d}\nabla(p-I_2p)=\begin{cases}
  \p{-2 x -\frac{1}{2} z +\frac{1}{2}+2 x z \\ 0 \\ -\frac{1}{2} x +x^{2} }&\t{on }\ T_1,  T_2, T_6, \\
  \p{ \frac{1}{2} z +2 x z  \\ 0 \\ \frac{1}{2} x +x^{2}  }&\t{on }\  T_3,  T_4,  T_5.  \end{cases} } 
 By \eqref{phi-3},  \eqref{phi-3g},  \eqref{3-p3} and  \eqref{3-p3d}, 
    we find the integral on the six tetrahedra $T_i$ of $R_3$, 
\a{  (\nabla (p- I_2 p), \nabla \phi_3)_{R_3} 
   = \frac 1{45}+ 0+ \frac 1{30}-\frac 1{45}-0-\frac 1{30} =0.  }

For $p=x^2z$,   on the six tetrahedra $T_i$ of $R_3$,  cf. Figure \ref{3r3},  we have
\an{\label{3-p4}  p-I_2 p= {z \left(2 z -1\right) \left(z -1\right)}/{2} \quad \
          \t{on }\ T_i, i=1,\dots,6, }
and
\an{\label{3-p4d}\nabla(p-I_2p)=\p{0 \\ 0 \\ -3 z +\frac{1}{2}+3 z^{2} }\quad \
          \t{on }\ T_i, i=1,\dots,6. }
 By \eqref{phi-3},  \eqref{phi-3g},  \eqref{3-p4} and  \eqref{3-p4d}, 
    we find the integral on the six tetrahedra $T_i$ of $R_3$, 
\a{  (\nabla (p- I_2 p), \nabla \phi_3)_{R_3}  = 0+0+0+0+0+ 0  =0.  }

For $p=x z^2$,    on the six tetrahedra $T_i$ of $R_3$,  cf. Figure \ref{3r3},  we have
\an{\label{3-p5}  p-I_2 p=\begin{cases}
  {x \left(2 z -1\right) \left(z -1\right)}/{2}&\t{on }\ T_1,  T_2, T_6, \\
  {x z \left(2 z -1\right)} /{2}&\t{on }\  T_3,  T_4,  T_5,  \end{cases} }
and
\an{\label{3-p5d}\nabla(p-I_2p)=\begin{cases}
  \p{ -\frac{3}{2} z +\frac{1}{2}+z^{2} \\ 0 \\ -\frac{3}{2} x +2 x z    } &\t{on }\ T_1,  T_2, T_6, \\
 \p{  -\frac{1}{2} z +z^{2} \\ 0 \\ -\frac{1}{2} x +2 x z    } &\t{on }\  T_3,  T_4,  T_5.  \end{cases} } 
 By \eqref{phi-3},  \eqref{phi-3g},  \eqref{3-p5} and  \eqref{3-p5d}, 
    we find the integral on the six tetrahedra $T_i$ of $R_3$, 
\a{  (\nabla (p- I_2 p), \nabla \phi_3)_{R_3}  = 0+0+0+0+0+ 0  =0.  }

For $p=xyz$,  on the six tetrahedra $T_i$ of $R_3$,  cf. Figure \ref{3r3},  we have
\an{\label{3-p6}  p-I_2 p=\begin{cases}
  {y \left(z -1\right) \left(2 x -1\right)} /{2}&\t{on }\ T_1,\\
  {x \left(z -1\right) \left(2 y -1\right)} /{2}&\t{on }\  T_2,\\
  {x y \left(2 z -1\right)} /{2}&\t{on }\  T_3,\\
  {y z \left(2 x +1\right)} /{2}&\t{on }\ T_4,\\
  {x z \left(1+2 y \right)} /{2}&\t{on }\ T_5,\\
  {x y \left(2 z -1\right)} /{2}&\t{on }\ T_6,  \end{cases} }
and
\an{\label{3-p6d} \ad{  g_{1} &= 
   \p{ y z -y  \\ -\frac{1}{2} z +\frac{1}{2}-x +x z  \\ -\frac{1}{2} y +x y   },
  &   g_{2} &=\p{   -\frac{1}{2} z +\frac{1}{2}-y +y z  \\ x z -x  \\ -\frac{1}{2} x +x y   },   \\
      g_{3} &= \p{ -\frac{1}{2} y +y z  \\ -\frac{1}{2} x +x z  \\ x y    }, 
  &   g_{4} &= \p{  y z  \\ \frac{1}{2} z +x z  \\ \frac{1}{2} y +x y   },   \\
   g_{5} &= \p{  \frac{1}{2} z +y z  \\ x z  \\ \frac{1}{2} x +x y   },  
  &   g_{6} &= \p{ -\frac{1}{2} y +y z  \\ -\frac{1}{2} x +x z  \\ x y      },   } }
  where $g_{i}=\nabla(p-I_2p)|_{T_i}$.
 By \eqref{phi-3},  \eqref{phi-3g},  \eqref{3-p6} and  \eqref{3-p6d}, 
    we find the integral on the six tetrahedra $T_i$ of $R_3$, 
\a{  (\nabla (p- I_2 p), \nabla \phi_3)_{R_3}  
  = \frac 1{45}+ \frac 1{90}+ \frac 1{180}- \frac 1{45}- \frac 1{90}-\frac 1{180}  =0.  }

  The proof is complete.
\end{proof}

\begin{lemma} 
Let $p\in P_3(R_4)$,   where $R_4$ is the support patch of tetrahedra of
  a vertex node basis function $\phi_4$, cf. Figure \ref{3r4}.
    Then the $P_2$ nodal interpolation of $p$, $I_2 p$, is the local $H^1$-projection of $p$,
     i.e., 
\an{\label{d-3-4} (\nabla(p-I_2 p), \nabla \phi_4)_{R_4} = 0.  }  
 \end{lemma}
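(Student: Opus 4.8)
The plan is to prove \eqref{d-3-4} by a single parity (symmetry) argument that disposes of the whole vertex patch at once, rather than monomial by monomial. First I would shift and isotropically scale $R_4$ so that the vertex carrying $\phi_4$ sits at the origin $\mathbf 0$; for an interior vertex of the uniform mesh this is the common corner of the eight surrounding cubes, so $R_4$ is the union of the $24$ tetrahedra of those cubes that meet at $\mathbf 0$. The geometric fact I would establish is that the central inversion $\iota:\mathbf x\mapsto-\mathbf x$ maps the uniform mesh onto itself: it sends each cube to the opposite cube and carries each of its six tetrahedra to a tetrahedron of the image cube under the same six-tetrahedron cut (same diagonal orientation). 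Hence $\iota(R_4)=R_4$, and since $\mathbf 0$ is a vertex of every tetrahedron of $R_4$, no tetrahedron is fixed; the inversion pairs the $24$ tetrahedra into $12$ pairs $(T,\iota T)$.

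Next I would record the two parity statements that drive the proof. Because the $P_2$ nodal set of $R_4$ is $\iota$-invariant and $\iota$ is affine (so $P_2\circ\iota=P_2$), nodal interpolation commutes with inversion, $I_2(q\circ\iota)=(I_2q)\circ\iota$. The basis function $\phi_4$ equals $1$ at $\mathbf 0$ and $0$ at every other node, and since $\iota$ fixes $\mathbf 0$ and permutes the remaining nodes, $\phi_4\circ\iota=\phi_4$; thus $\phi_4$ is even and $\nabla\phi_4$ is odd under $\iota$. For the other factor it suffices, exactly as in the preceding lemmas, to take $p$ equal to a single cubic monomial, since $p-I_2p=0$ whenever $p\in P_2$. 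Every cubic monomial satisfies $p(-\mathbf x)=-p(\mathbf x)$, so $p-I_2p$ is odd and $\nabla(p-I_2p)$ is even under $\iota$.

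Combining the two, the integrand $\nabla(p-I_2p)\cdot\nabla\phi_4$ is (even)$\times$(odd), hence odd under $\iota$, and integrating an odd function over the $\iota$-symmetric set $R_4$ yields $(\nabla(p-I_2p),\nabla\phi_4)_{R_4}=0$, which is \eqref{d-3-4}; concretely, the contribution of each $T$ cancels that of its partner $\iota T$. I expect the main obstacle to be the first step: checking carefully that inversion about a mesh \emph{vertex} (a corner inversion, unlike a cube-center inversion) really preserves the uniform mesh against the fixed diagonal orientation, and pinning down the $24$-tetrahedron structure of $R_4$. If one instead prefers the explicit format of Lemma \ref{l-d} and the lemmas following it, the coordinate-permutation ($S_3$) symmetry of $R_4$ reduces the ten cubic monomials to the three representatives $x^3$, $x^2y$ and $xyz$, and one then tabulates $\nabla(p-I_2p)$ and $\nabla\phi_4$ on the $24$ tetrahedra and verifies that the elementwise integrals sum to zero.
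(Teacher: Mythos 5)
Your proposal is correct, and it reaches \eqref{d-3-4} by a genuinely different route from the paper. The paper argues by brute force: it tabulates $\phi_4$ and $\nabla\phi_4$ on all $24$ tetrahedra of $R_4$, reduces the ten cubic monomials to the representatives $x^3$, $x^2y$, $xyz$ by coordinate-permutation symmetry, and then evaluates the $24$ elementwise integrals for each representative and checks that they sum to zero. Your central-inversion argument replaces all of that with one parity observation, and every ingredient of it is corroborated by the paper's own data: the vertex set of Figure \ref{3r4} is closed under $\mathbf x\mapsto-\mathbf x$ (e.g.\ $-\mathbf x_1=\mathbf x_{14}$, $-\mathbf x_5=\mathbf x_8$, $-\mathbf x_3=\mathbf x_{12}$), the $24$ tetrahedra pair up under $\iota$ (e.g.\ $T_6\leftrightarrow T_{18}$, $T_7\leftrightarrow T_{13}$, $T_{11}\leftrightarrow T_{20}$, $T_{12}\leftrightarrow T_{19}$), and for $p=x^3$ the paper's per-element integrals are $-1/20$ on $T_6,T_7$ against $+1/20$ on $T_{18},T_{13}$ and $-1/60$ on $T_{11},T_{12}$ against $+1/60$ on $T_{20},T_{19}$ --- exactly the pairwise cancellation your odd-integrand claim predicts. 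The one point you rightly single out as the potential obstacle, namely that inversion about a mesh \emph{vertex} preserves the uniform triangulation, does hold: the six-tetrahedron cut of a cube is determined by its main diagonal only up to sign, and $\iota$ carries a lattice cube with diagonal direction $(1,1,1)$ to a lattice cube with diagonal direction $(-1,-1,-1)$, hence with the same cut; the tetrahedron list in Figure \ref{3r4} confirms this. As to what each approach buys: the paper's computation is mechanical, self-contained and stylistically uniform with the three preceding lemmas, while yours is much shorter, explains \emph{why} the $24$ contributions cancel rather than merely verifying that they do, and in fact would also dispose of the other three patches, since the mid-cube, mid-face and mid-edge nodes are likewise centers of symmetry of the uniform mesh. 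In a final write-up you should spell out the commutation $I_2(q\circ\iota)=(I_2q)\circ\iota$, which follows from the $\iota$-invariance of the element partition of $R_4$ and of its $P_2$ nodal set together with uniqueness of Lagrange interpolation; with that stated, the argument is complete.
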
 

\begin{proof}  We shift and isotropic scale the patch  $R_4$ to
  the one  shown in Figure \ref{3r4}.

 \begin{figure}[H] 
  \setlength{\unitlength}{0.9pt}

 \begin{center}\begin{picture}(200.,200 )(-10., 0.)

 \put(0, 0){\begin{picture}(200.,200)(  0.,  0.)   
    \multiput(0,20)(4,0){20}{\circle*{1}} \multiput(80,20)(4,-2){10}{\circle*{1}} 
     \multiput(80,20)(0,4){20}{\circle*{1}}  \multiput(80,20)(-3,3){26}{\circle*{1}} 
      \multiput(80,20)(2,3){20}{\circle*{1}} \multiput(80,20)(-4,-2){10}{\circle*{1}}
      \multiput(80,20)(-2,3){20}{\circle*{1}} 
       
       \put(80,100){\circle*{4}}  
      
    \put(0,100){\line(0,-1){80}} 
     \put(40, 0){\line(-2, 1){40}}\put(40, 0){\line(0, 1){80}}\put(40, 0){\line(1,0){80}}
      \put(120, 80){\line(0,-1){80}} 
      \put(40, 80){\line(-2,-3){40}}\put(40, 80){\line( 1,-1){80}}

 \put(-40,20){\begin{picture}(120.,120)(  0.,  0.)   
    \multiput(0,20)(4,0){20}{\circle*{1}} \multiput(80,20)(4,-2){10}{\circle*{1}} 
     \multiput(80,20)(0,4){20}{\circle*{1}}  \multiput(80,20)(-3,3){26}{\circle*{1}} 
      \multiput(80,20)(2,3){20}{\circle*{1}} \multiput(80,20)(-4,-2){10}{\circle*{1}}
      \multiput(80,20)(-2,3){20}{\circle*{1}} 
       
       \put(0,100){\line(0,-1){80}} 
     \put(40, 0){\line(-2, 1){40}}\put(40, 0){\line(0, 1){80}}   \put(40, 80){\line(-2,-3){40}} 

       \end{picture} }
 \put(80,0){\begin{picture}(120.,120)(  0.,  0.)   
    \multiput(0,20)(4,0){20}{\circle*{1}} \multiput(80,20)(4,-2){10}{\circle*{1}} 
     \multiput(80,20)(0,4){20}{\circle*{1}}  \multiput(80,20)(-3,3){26}{\circle*{1}} 
      \multiput(80,20)(2,3){20}{\circle*{1}} \multiput(80,20)(-4,-2){10}{\circle*{1}}
      \multiput(80,20)(-2,3){20}{\circle*{1}}

      \put(40, 0){\line(0, 1){80}} \put(40, 0){\line(1,0){80}}
      \put(120, 80){\line(0,-1){80}}   \put(40, 80){\line( 1,-1){80}} 
     
       \end{picture} }

 \put(40,20){\begin{picture}(120.,120)(  0.,  0.)   
    \multiput(0,20)(4,0){20}{\circle*{1}} \multiput(80,20)(4,-2){10}{\circle*{1}} 
     \multiput(80,20)(0,4){20}{\circle*{1}}  \multiput(80,20)(-3,3){26}{\circle*{1}} 
      \multiput(80,20)(2,3){20}{\circle*{1}} \multiput(80,20)(-4,-2){10}{\circle*{1}}
      \multiput(80,20)(-2,3){20}{\circle*{1}}

       \end{picture} }

       \end{picture} }

 \put(0,80){\begin{picture}(200.,200)(  0.,  0.)   
    \multiput(0,20)(4,0){20}{\circle*{1}} \multiput(80,20)(4,-2){10}{\circle*{1}} 
     \multiput(80,20)(0,4){20}{\circle*{1}}  \multiput(80,20)(-3,3){26}{\circle*{1}} 
      \multiput(80,20)(2,3){20}{\circle*{1}} \multiput(80,20)(-4,-2){10}{\circle*{1}}
      \multiput(80,20)(-2,3){20}{\circle*{1}} 
       
       \put(70,108){$\b x_{14}$}   \put(72,-71){$\b x_1$}   \put(-13,28){$\b x_5$} 
      
     \put(0,100){\line(1,0){80}}\put(0,100){\line(0,-1){80}}\put(0,100){\line(2,-1){40}}
     \put(40, 0){\line(-2, 1){40}}\put(40, 0){\line(0, 1){80}}\put(40, 0){\line(1,0){80}}
     \put(120,80){\line(-2, 1){40}}\put(120, 80){\line(0,-1){80}}\put(120, 80){\line(-1,0){80}}
     \put(40,80){\line(2, 1){40}}\put(40, 80){\line(-2,-3){40}}\put(40, 80){\line( 1,-1){80}}

 \put(-40,20){\begin{picture}(120.,120)(  0.,  0.)   
    \multiput(0,20)(4,0){20}{\circle*{1}} \multiput(80,20)(4,-2){10}{\circle*{1}} 
     \multiput(80,20)(0,4){20}{\circle*{1}}  \multiput(80,20)(-3,3){26}{\circle*{1}} 
      \multiput(80,20)(2,3){20}{\circle*{1}} \multiput(80,20)(-4,-2){10}{\circle*{1}}
      \multiput(80,20)(-2,3){20}{\circle*{1}}

     \put(0,100){\line(1,0){80}}\put(0,100){\line(0,-1){80}}\put(0,100){\line(2,-1){40}}
     \put(40, 0){\line(-2, 1){40}}\put(40, 0){\line(0, 1){80}} 
     \put(120,80){\line(-2, 1){40}} \put(120, 80){\line(-1,0){80}}
     \put(40,80){\line(2, 1){40}}\put(40, 80){\line(-2,-3){40}} 

       \end{picture} }
 \put(80,0){\begin{picture}(120.,120)(  0.,  0.)   
    \multiput(0,20)(4,0){20}{\circle*{1}} \multiput(80,20)(4,-2){10}{\circle*{1}} 
     \multiput(80,20)(0,4){20}{\circle*{1}}  \multiput(80,20)(-3,3){26}{\circle*{1}} 
      \multiput(80,20)(2,3){20}{\circle*{1}} \multiput(80,20)(-4,-2){10}{\circle*{1}}
      \multiput(80,20)(-2,3){20}{\circle*{1}}

     \put(0,100){\line(1,0){80}} \put(0,100){\line(2,-1){40}}
      \put(40, 0){\line(0, 1){80}} \put(40, 0){\line(1,0){80}}
     \put(120,80){\line(-2, 1){40}}\put(120, 80){\line(0,-1){80}}\put(120, 80){\line(-1,0){80}}
     \put(40,80){\line(2, 1){40}} \put(40, 80){\line( 1,-1){80}} 
     
       \end{picture} }

 \put(40,20){\begin{picture}(120.,120)(  0.,  0.)   
    \multiput(0,20)(4,0){20}{\circle*{1}} \multiput(80,20)(4,-2){10}{\circle*{1}} 
     \multiput(80,20)(0,4){20}{\circle*{1}}  \multiput(80,20)(-3,3){26}{\circle*{1}} 
      \multiput(80,20)(2,3){20}{\circle*{1}} \multiput(80,20)(-4,-2){10}{\circle*{1}}
      \multiput(80,20)(-2,3){20}{\circle*{1}}

     \put(0,100){\line(1,0){80}} \put(0,100){\line(2,-1){40}} 
     \put(120,80){\line(-2, 1){40}} \put(120, 80){\line(-1,0){80}}
     \put(40,80){\line(2, 1){40}}  
     
       \end{picture} }

       \end{picture} }
 
 \end{picture}\end{center}

\caption{The support patch $R_4$ of tetrahedra of the $P_2$ Lagrange nodal basis $\phi_4$ at
   a vertex node $\b x_{15}$, where  
$\b x_{1}(0,0,-1)$, 
$\b x_{2}(-1,0,-1)$, 
$\b x_{3}(-1,-1,-1)$, 
$\b x_{4}(0,-1,-1)$, 
$\b x_{5}(1,0,0)$, 
$\b x_{6}(1,1,0)$, 
$\b x_{7}(0,1,0)$, 
$\b x_{8}(-1,0,0)$, 
$\b x_{9}(-1,-1,0)$, 
$\b x_{10}(0,-1,0)$, 
$\b x_{11}(1,0,1)$, 
$\b x_{12}(1,1,1)$, 
$\b x_{13}(0,1,1)$, 
$\b x_{14}(0,0,1)$, 
$\b x_{15}(0,0,0)$, 
$T_{1}=\b x_{1}\b x_{5}\b x_{6}\b x_{15}$, 
$T_{2}=\b x_{1}\b x_{6}\b x_{7}\b x_{15}$, 
$T_{3}=\b x_{1}\b x_{2}\b x_{7}\b x_{15}$, 
$T_{4}=\b x_{7}\b x_{2}\b x_{8}\b x_{15}$, 
$T_{5}=\b x_{1}\b x_{2}\b x_{3}\b x_{15}$, 
$T_{6}=\b x_{2}\b x_{3}\b x_{8}\b x_{15}$, 
$T_{7}=\b x_{3}\b x_{8}\b x_{9}\b x_{15}$, 
$T_{8}=\b x_{3}\b x_{9}\b x_{10}\b x_{15}$, 
$T_{9}=\b x_{3}\b x_{10}\b x_{4}\b x_{15}$, 
$T_{10}=\b x_{3}\b x_{1}\b x_{4}\b x_{15}$, 
$T_{11}=\b x_{1}\b x_{4}\b x_{5}\b x_{15}$, 
$T_{12}=\b x_{10}\b x_{4}\b x_{5}\b x_{15}$, 
$T_{13}=\b x_{5}\b x_{6}\b x_{12}\b x_{15}$, 
$T_{14}=\b x_{6}\b x_{7}\b x_{12}\b x_{15}$, 
$T_{15}=\b x_{7}\b x_{13}\b x_{12}\b x_{15}$, 
$T_{16}=\b x_{12}\b x_{13}\b x_{14}\b x_{15}$, 
$T_{17}=\b x_{11}\b x_{12}\b x_{14}\b x_{15}$, 
$T_{18}=\b x_{5}\b x_{11}\b x_{12}\b x_{15}$, 
$T_{19}=\b x_{7}\b x_{8}\b x_{13}\b x_{15}$, 
$T_{20}=\b x_{8}\b x_{13}\b x_{14}\b x_{15}$, 
$T_{21}=\b x_{8}\b x_{9}\b x_{14}\b x_{15}$, 
$T_{22}=\b x_{9}\b x_{10}\b x_{14}\b x_{15}$, 
$T_{23}=\b x_{10}\b x_{11}\b x_{14}\b x_{15}$, and 
$T_{24}=\b x_{10}\b x_{5}\b x_{11}\b x_{15}$ . }
\label{3r4}
\end{figure}

The nodal basis $\phi_4$ is supported on the 24 tetrahedra that $\phi_4=$
\a{ \begin{cases}  
\left(1+z \right) \left(1+2 z \right),&T_{5},T_{10}, \\  
\left(1+x \right) \left(2 x +1\right),&T_{6},T_{7}, \\ 
\left(y +1\right) \left(1+2 y \right),&T_{8},T_{9}, \\ 
\left(2 x -1\right) \left(x -1\right),&T_{13},T_{18}, \\ 
\left(2 y -1\right) \left(y -1\right),&T_{14},T_{15}, \\ 
\left(2 z -1\right) \left(z -1\right),&T_{16},T_{17},  
    \end{cases}  \begin{cases}  
\left(2 x -2 y -1\right) \left(x -y -1\right),&T_{12},T_{24}, \\ 
\left(2 x -2 z -1\right) \left(x -z -1\right),&T_{1},T_{11}, \\ 
\left(2 y -2 z -1\right) \left(y -z -1\right),&T_{2},T_{3}, \\ 
\left(x -y +1\right) \left(2 x -2 y +1\right),&T_{4},T_{19}, \\ 
\left(x -z +1\right) \left(2 x -2 z +1\right),&T_{20},T_{21} , \\ 
\left(y -z +1\right) \left(2 y -2 z +1\right),&T_{22},T_{ 23},  
    \end{cases}}  \def\lra#1{\langle #1 \rangle}
and $\nabla\phi_4=$
\a{ \begin{cases}  
  \lra{ 0,0,4 z +3},&T_{5},T_{10}, \\  
  \lra{ 4 x +3,0,0},&T_{6},T_{7}, \\ 
  \lra{ 0,4 y +3,0},&T_{8},T_{9}, \\ 
  \lra{ 4 x -3,0,0},&T_{13},T_{18}, \\ 
  \lra{ 0,4 y -3,0},&T_{14},T_{15}, \\ 
  \lra{ 0,0,4 z -3},&T_{16},T_{17},  
    \end{cases}  \begin{cases}  
  \lra{ 4 x -4 y -3,-4 x +4 y +3,0},&T_{12},T_{24}, \\ 
\lra{ 4 x -4 z -3,0,-4 x +4 z +3},&T_{1},T_{11}, \\ 
  \lra{ 0,4 y -4 z -3,-4 y +4 z +3},&T_{2},T_{3}, \\ 
 \lra{ 4 x -4 y +3,-4 x +4 y -3,0},&T_{4},T_{19}, \\ 
  \lra{ 4 x -4 z +3,0,-4 x +4 z -3},&T_{20},T_{21} , \\ 
  \lra{ 0,4 y -4 z +3,-4 y +4 z -3},&T_{22},T_{ 23}.  
    \end{cases}}  

For $p=x^3$,  on the 24 tetrahedra of $R_4$,  we have
\a{ p-I_2p=\begin{cases} 
    \frac{x \left(2 x -1\right) \left(x -1\right)}{2}, & \t{on} \ T_i,\ i=1,2,11,\dots,18,23,24, \\  
    \frac{x \left(1+x \right) \left(2 x +1\right)}{2}, & \t{on} \ T_i,\ i=3,\dots,10,19,\dots,22,
    \end{cases} } 
and $\nabla(p-I_2p)=$
\an{\label{4-p1d} \begin{cases} 
    \lra{-3 x +\frac{1}{2}+3 x^{2},0,0}, & T_i,\ i=1,2,11,\dots,18,23,24, \\  
    \lra{3 x^{2}+3 x +\frac{1}{2},0,0}, & T_i,\ i=3,\dots,10,19,\dots,22.
    \end{cases} } 
 By $\nabla \phi_4$ above and  \eqref{4-p1d}, 
    we find the integral on the 24 tetrahedra $T_i$ of $R_4$, 
\a{ (\nabla(p-I_2p),\nabla \phi_4)_{T_i}&=\begin{cases}  
    -\frac 1{20}, & i=6,7,\\
    \ \; \frac 1{20}, & i=13,18, \\
    -\frac 1{60}, & i=11,12, \\ 
    \ \;  \frac 1{60}, & i=19,20, \\
     0, & \t{rest} \ i.
    \end{cases} \\
    (\nabla(p-I_2p),\nabla \phi_4)_{R_4} &= 0.  }

For $p=x^2y$,   on the 24 tetrahedra of $R_4$,  we have
\a{ p-I_2p=\begin{cases} 
    \frac{y \left(2 x -1\right) \left(x -1\right)}{2},& T_i,\ i=1,13,17,18,  \\  
    \frac{x \left(y -1\right) \left(2 x -1\right)}{2},& T_i,\ i=2,14,15,16, \\ 
    \frac{x y \left(2 x +1\right)}{2},& T_i,\ i=3,4,19,20, \\ 
    \frac{y \left(1+x \right) \left(2 x +1\right)}{2},& T_i,\ i=5,6,7,21, \\ 
    \frac{x \left(y +1\right) \left(2 x +1\right)}{2},& T_i,\ i=8,9,10,22, \\ 
    \frac{x y \left(2 x -1\right)}{2},& T_i,\ i=11,12,23,24. 
    \end{cases} }  and $\nabla(p-I_2p)=$
\an{\label{4-p2d} \begin{cases} 
    \lra{-\frac{3}{2} y +2 x y ,-\frac{3}{2} x +\frac{1}{2}+x^{2},0 },& T_i,\ i=1,13,17,18,  \\  
    \lra{-2 x -\frac{1}{2} y +\frac{1}{2}+2 x y ,-\frac{1}{2} x +x^{2},0},& T_i,\ i=2,14,15,16, \\ 
    \lra{\frac{1}{2} y +2 x y ,\frac{1}{2} x +x^{2},0},& T_i,\ i=3,4,19,20, \\ 
    \lra{\frac{3}{2} y +2 x y ,\frac{3}{2} x +\frac{1}{2}+x^{2},0},& T_i,\ i=5,6,7,21, \\ 
    \lra{\frac{1}{2}+2 x +\frac{1}{2} y +2 x y ,\frac{1}{2} x +x^{2},0},& T_i,\ i=8,9,10,22, \\ 
    \lra{-\frac{1}{2} y +2 x y ,-\frac{1}{2} x +x^{2},0},& T_i,\ i=11,12,23,24. 
    \end{cases} }  
By $\nabla \phi_4$ above and  \eqref{4-p2d}, 
    we find the integral on the 24 tetrahedra $T_i$ of $R_4$,  
$(\nabla(p-I_2p),\nabla \phi_4)_{T_i}=$
\a{\begin{cases}  
   \ \;  \frac 1{120}, & i=1,14,\\
    -\frac 1{120}, & i=8,21,\\
 \ \;  \frac 1{45}, & i=4,\\
   -\frac 1{45}, & i=24,\\
  \ \;   \frac 7{360}, & i=18,\\
    -\frac 7{360}, & i=6,\\
 \ \;  \frac 7{180}, & i=13,\\
    -\frac 7{180}, & i=7, 
    \end{cases} && \begin{cases}  
 \ \;  \frac 1{720}, & i=2,3,15,\\
    -\frac 1{720}, & i=9,22,23,\\
 \ \;  \frac 1{360}, & i=20,\\
    -\frac 1{360}, & i=11,\\
 \ \;  \frac 1{144}, & i=19,\\
    -\frac 1{144}, & i=12,\\ \\
 \ \;  0, & \t{rest} \ i.
    \end{cases} }
Thus $ (\nabla(p-I_2p),\nabla \phi_4)_{R_4}  = 0$.

For $p=x y z$,  on the 24 tetrahedra of $R_4$,  we have $p-I_2p=$
\a{ \begin{cases} 
    \frac{x y \left(1+2 z \right)}{2},& T_{3},T_{11},  \\  
    \frac{x y \left(2 z -1\right)}{2},& T_{20},T_{23},  \\  
    \frac{x z \left(2 y -1\right)}{2},& T_{2},T_{19},  \\   
    \frac{x z \left(1+2 y \right)}{2},& T_{12},T_{22},  \\   
    \frac{y z \left(2 x -1\right)}{2},& T_{1},T_{24},  \\ 
    \frac{y z \left(2 x +1\right)}{2},& T_{4},T_{21},  \\   
    \frac{x \left(1+2 z \right) \left(y +1\right)}{2},& T_{9},  \\  
    \frac{x \left(1+z \right) \left(1+2 y \right)}{2},& T_{10},  \\
    \frac{x \left(2 z -1\right) \left(y -1\right)}{2},& T_{15}, 
     \end{cases}&&  \begin{cases} 
    \frac{x \left(z -1\right) \left(2 y -1\right)}{2},& T_{16},  \\  
    \frac{y \left(1+z \right) \left(2 x +1\right)}{2},& T_{5},  \\  
    \frac{y \left(1+2 z \right) \left(1+x \right)}{2},& T_{6},  \\   
    \frac{y \left(z -1\right) \left(2 x -1\right)}{2},& T_{17},  \\  
    \frac{y \left(2 z -1\right) \left(x -1\right)}{2},& T_{18},  \\ 
    \frac{z \left(1+2 y \right) \left(1+x \right)}{2},& T_{7},  \\  
    \frac{z \left(y +1\right) \left(2 x +1\right)}{2},& T_{8},  \\    
    \frac{z \left(2 y -1\right) \left(x -1\right)}{2},& T_{13},  \\  
    \frac{z \left(y -1\right) \left(2 x -1\right)}{2},& T_{14},
    \end{cases} }  
and $\nabla(p-I_2p)=$ 
\an{\label{4-p3d} \begin{cases} 
    \lra{\frac{1}{2} y +y z ,\frac{1}{2} x +x z ,x y },& T_{3},T_{11},  \\  
    \lra{-\frac{1}{2} y +y z ,-\frac{1}{2} x +x z ,x y },& T_{20},T_{23},  \\  
    \lra{-\frac{1}{2} z +y z ,x z ,-\frac{1}{2} x +x y},& T_{2},T_{19},  \\   
    \lra{\frac{1}{2} z +y z ,x z ,\frac{1}{2} x +x y },& T_{12},T_{22},  \\   
    \lra{y z ,-\frac{1}{2} z +x z ,-\frac{1}{2} y +x y },& T_{1},T_{24},  \\ 
    \lra{y z ,\frac{1}{2} z +x z ,\frac{1}{2} y +x y },& T_{4},T_{21},  \\   
    \lra{\frac{1}{2} y +\frac{1}{2}+z +y z ,\frac{1}{2} x +x z ,x y +x },& T_{9},  \\  
    \lra{\frac{1}{2} z +\frac{1}{2}+y +y z ,x z +x ,\frac{1}{2} x +x y },& T_{10},  \\
    \lra{-\frac{1}{2} y +\frac{1}{2}-z +y z ,-\frac{1}{2} x +x z ,x y -x},& T_{15}, 
    \end{cases}} and 
\an{\label{4-p3d2} \begin{cases} 
    \lra{-\frac{1}{2} z +\frac{1}{2}-y +y z ,x z -x ,-\frac{1}{2} x +x y},& T_{16},  \\  
    \lra{y z +y ,\frac{1}{2} z +\frac{1}{2}+x +x z ,\frac{1}{2} y +x y},& T_{5},  \\  
    \lra{\frac{1}{2} y +y z ,\frac{1}{2} x +\frac{1}{2}+z +x z ,x y +y},& T_{6},  \\   
    \lra{y z -y ,-\frac{1}{2} z +\frac{1}{2}-x +x z ,-\frac{1}{2} y +x y},& T_{17},  \\  
    \lra{-\frac{1}{2} y +y z ,-\frac{1}{2} x +\frac{1}{2}-z +x z ,x y -y},& T_{18},  \\ 
    \lra{\frac{1}{2} z +y z ,x z +z ,\frac{1}{2} x +\frac{1}{2}+y +x y},& T_{7},  \\  
    \lra{y z +z ,\frac{1}{2} z +x z ,\frac{1}{2} y +\frac{1}{2}+x +x y},& T_{8},  \\    
    \lra{-\frac{1}{2} z +y z ,x z -z ,-\frac{1}{2} x +\frac{1}{2}-y +x y},& T_{13},  \\  
    \lra{y z -z ,-\frac{1}{2} z +x z ,-\frac{1}{2} y +\frac{1}{2}-x +x y },& T_{14}.
    \end{cases} }

By $\nabla \phi_4$ above, \eqref{4-p3d} and  \eqref{4-p3d2}, 
    we find the integral on the 24 tetrahedra $T_i$ of $R_4$,  
    $(\nabla(p-I_2p),\nabla \phi_4)_{T_i}=$
\a{  \begin{cases}  
\ \ \frac{1}{90}, & i=12  ,\\
-\frac{1}{90}, & i=19 ,\\
\ \ \frac{1}{120}, & i=11,13,18,\\
-\frac{1}{120}, & i=6,7,20 ,\\
\ \ \frac{1}{180}, & i=21 ,\\
-\frac{1}{180}, & i=1 ,\\ \quad & \end{cases} &&  \begin{cases} 
\ \ \frac{1}{360}, & i=22 ,\\
-\frac{1}{360}, & i=2 ,\\
\ \ \frac{1}{240}, & i=3,14,15  ,\\ 
-\frac{1}{240}, & i=8,9,23 ,\\
\ \ \frac{7}{720}, & i=4 ,\\
-\frac{7}{720}, & i=24,\\
0,&i=5,10,16,17    ,
    \end{cases} }
and $    (\nabla(p-I_2p),\nabla \phi_4)_{R_4} = 0$. 
The lemma is proved.      
\end{proof}

\section{Superconvergence}

We prove the $H^1$ and $L^2$ superconvergence in this section.

\begin{theorem}
Let $u \in H^4(\Omega) \cap H^1_0 (\Omega)$
    be the solution of the Poisson equation \eqref{e1-1}--\eqref{e1-2}. 
Let $u\in V_h $ of \eqref{V-h}
    be the solution of the finite element equation   \eqref{finite}.
It holds that   
\an{  \label{sup-1}  |I_h u -u_{h}|_{1} & \le  C h^4  |u|_{4}, }
where $I_h$ is the $P_2$ Lagrange interpolation operator. 
\end{theorem}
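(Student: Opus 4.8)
The plan is to reduce the seminorm estimate \eqref{sup-1} to a single \emph{weak orthogonality} bound, and then to extract that bound from the four local orthogonality lemmas of Section~2.

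First I would set $e_h:=I_hu-u_h\in V_h$ and use Galerkin orthogonality. Since $V_h\subset H^1_0(\Omega)$, testing \eqref{finite} against $v_h\in V_h$ and comparing with the weak form of \eqref{e1-1}--\eqref{e1-2} gives $(\nabla(u-u_h),\nabla v_h)=0$, so that for every $v_h\in V_h$
\[ (\nabla e_h,\nabla v_h)=(\nabla(I_hu-u),\nabla v_h)=-(\nabla(u-I_hu),\nabla v_h). \]
Choosing $v_h=e_h$ and using coercivity of $(\nabla\cdot,\nabla\cdot)$ on $V_h$, the theorem follows once I prove the weak orthogonality
\[ |(\nabla(u-I_hu),\nabla v_h)|\le Ch^4|u|_4\,|v_h|_1\qquad\forall\,v_h\in V_h, \]
which is exactly the optimal form announced after \eqref{w-o} and the only thing left to establish.

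Next I would localize. Writing $v_h=\sum_i c_i\phi_i$ with $c_i=v_h(\b x_i)$ the nodal values, linearity gives $(\nabla(u-I_hu),\nabla v_h)=\sum_i c_i\,(\nabla(u-I_hu),\nabla\phi_i)_{R_i}$, where $R_i$ is the support patch of $\phi_i$. Every interior node is, up to a rigid motion, an isotropic scaling, and the cubic symmetry of the mesh, one of the four types treated in Section~2, so on each $R_i$ I may invoke the corresponding lemma. Fixing the degree-$3$ averaged Taylor polynomial $p_i$ of $u$ on $R_i$ and using $I_h|_{R_i}=I_2$ together with the lemma, the cubic part cancels:
\[ (\nabla(u-I_hu),\nabla\phi_i)_{R_i}=(\nabla((u-p_i)-I_2(u-p_i)),\nabla\phi_i)_{R_i}. \]
A Bramble--Hilbert argument on the reference patch, combined with the $P_2$ interpolation estimate and the scalings $|\phi_i|_{1,R_i}\le Ch^{1/2}$ and $|u-p_i|_{3,R_i}\le Ch|u|_{4,R_i}$, then yields the per-patch remainder $|(\nabla(u-I_hu),\nabla\phi_i)_{R_i}|\le Ch^{7/2}|u|_{4,R_i}$.

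The main obstacle is the summation over patches. A direct Cauchy--Schwarz pairing of $(c_i)$ against these per-patch bounds costs a factor $(\sum_i c_i^2)^{1/2}\sim h^{-3/2}\|v_h\|_0\le Ch^{-3/2}|v_h|_1$, by norm equivalence and Poincar\'e, and so delivers only $O(h^2)$ --- even weaker than the known $O(h^3)$ estimate of \cite{Brandts}. Recovering the sharp $h^4$ requires exploiting two further structures that the crude bound discards. First, the pairing is between gradients: integrating by parts against $\phi_i$, which vanishes on $\partial R_i$, exposes a divergence structure, and the leading part of each term is a fixed linear functional of $D^4u(\b x_i)$ of size $O(h^5)$, whose naive Riemann sum against $v_h$ reproduces precisely the $O(h^2)$ above. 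Second, because $u-I_hu$ vanishes on $\partial\Omega$, the identity $\sum_i\ell(\b x_i)(\nabla(u-I_hu),\nabla\phi_i)=(\nabla(u-I_hu),\nabla\ell)=0$ holds for every global linear $\ell$, so the coefficients $c_i$ may be shifted by the nodal values of a linear function at no cost. Combining these --- via a discrete summation by parts that transfers first differences of the $c_i$ onto the $u$-side (equivalently, a superapproximation estimate subtracting local polynomials from $v_h$) --- is what must supply the missing powers of $h$. Verifying that this closes at the sharp $h^4$, rather than stalling at $h^2$ or $h^3$, is the genuinely delicate step; the boundary patches, where both $v_h$ and $u-I_hu$ vanish at the missing nodes, are handled by the same localization and contribute at the same order.
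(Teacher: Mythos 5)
Your reduction to the weak orthogonality bound $|(\nabla(u-I_hu),\nabla v_h)|\le Ch^4|u|_4\,|v_h|_1$, the decomposition of $v_h$ over the nodal basis, and the use of the four patch lemmas \eqref{d-3-1}--\eqref{d-3-4} to subtract a local cubic on each $R_i$ are exactly the paper's route, and your per-patch estimate $|(\nabla(u-I_hu),\nabla\phi_i)_{R_i}|\le Ch^{7/2}|u|_{4,R_i}$ is the correct consequence of Bramble--Hilbert together with $|\phi_i|_{1,R_i}\le Ch^{1/2}$. The genuine gap is that your argument stops there: the closing paragraph lists mechanisms (discrete summation by parts in the nodal index, invariance under adding a global linear function, superapproximation) that ``must supply the missing powers of $h$,'' but never assembles them into an inequality. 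Since you yourself compute that the direct Cauchy--Schwarz pairing of the nodal values against the per-patch bounds yields only $O(h^2)$, the entire content of the theorem beyond Section~2 --- getting from $O(h^2)$ to the claimed $O(h^4)$ --- is left unproved; naming the cancellation is not the same as exhibiting it.

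For comparison, the paper closes this step in a single displayed chain: it bounds $\sum_i c_{j,i}(\nabla(I-I_h)(I-P_{3,j})u,\nabla\phi_i)_{R_j}$ by $C\bigl(\sum_{R_j}|(I-P_{3,j})u|^2_{1,R_j}\bigr)^{1/2}|v_h|_1$ and then inserts $|(I-P_{3,j})u|^2_{1,R_j}\le h^8|u|^2_{4,R_j}$, where $P_{3,j}$ is the local $L^2$ projection onto $P_3(R_j)$. You should test that line against your own accounting: absorbing the nodal coefficients costs $\bigl(\sum_i c_{j,i}^2|\phi_i|^2_{1,R_i}\bigr)^{1/2}\sim h^{-1}\|v_h\|_0$ rather than $|v_h|_1$, and Bramble--Hilbert gives $|(I-P_{3,j})u|_{1,R_j}\le Ch^3|u|_{4,R_j}$, not $Ch^4$; these two discrepancies account precisely for the $h^2$ you found. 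So the obstruction you identified is real, it sits at the same step in the paper, and it is not visibly resolved there either; indeed the paper's own tables show the rate of $|I_hu-u_h|_1$ tending to $3$, not $4$, consistent with the $O(h^3)$ bound of Brandts and K\v{r}\'{i}\v{z}ek. In short: same approach, but neither your sketch nor the paper's displayed inequality actually establishes \eqref{sup-1} at order $h^4$, and any completion of the summation step needs to be checked against the numerically observed rate of $3$.
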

\begin{proof}  
 Multiplying \eqref{e1-1} by $  v_h\in  V_h$  and
  doing integration by parts,  we get
\an{\label{p-1}  
     ( \nabla  u,\nabla  v_h) &=(  f, v_h).
} 
 Subtracting \eqref{p-1} from \eqref{finite}, we get
\an{\label{p-2}  
     ( \nabla(  u- u_h), \nabla v_h )=0 \quad \forall v_h \in V_h .
} 

 By \eqref{p-2}, \eqref{d-3-1}, \eqref{d-3-2}, \eqref{d-3-3} and
   \eqref{d-3-4},we have, denoting $v_h=I_h u - u_h\in V_h$,
\a{  |I_h u-u_h|_1^2 &= (\nabla(I_h u-u), \nabla v_h) \\
 &=\quad \sum_{R_1\subset\Omega} c_{1,i}(\nabla(I-I_h)(I-P_{3,1}) u, \nabla \phi_1)_{R_1} \\
&\quad \ + \sum_{R_2\subset\Omega} c_{2,i}(\nabla(I-I_h)(I-P_{3,2}) u, \nabla\phi_2)_{R_2} \\
&\quad \ + \sum_{R_3\subset\Omega} c_{3,i}(\nabla(I-I_h)(I-P_{3,3}) u, \nabla\phi_3)_{R_3} \\
&\quad \ + \sum_{R_4\subset\Omega} c_{4,i}(\nabla(I-I_h)(I-P_{3,4}) u, \nabla\phi_4)_{R_4} \\
    &\le C \bigg(\sum_{j=1}^4 \Big(\sum_{R_j \subset\Omega} |(I-P_{3,j}) u|_{1,R_j}^2\Big)^{\frac 12}
              \bigg) |v_h|_1 \\
    &\le C \bigg(\sum_{j=1}^4 \Big(\sum_{R_j\subset\Omega } h^8|u|_{4,R_j}^2\Big)^{\frac 12}
              \bigg) |v_h|_1 \\
    & = C h^4 |u|_4 |v_h|_1,  }
where $P_{3,j}$ is the local $L^2$ projection operator on to $P_3(R_j)$,
   and $I-I_h$ is $H^1$-stable, cf. \cite{Scott-Zhang}.
Hence, canceling one $| v_h|_{1}$ on each side, we get \eqref{sup-1}.
The proof is complete.
\end{proof}

To prove the $L^2$-supconvergence we use the standard duality argument.  Let 
  $ w \in H^1_0(\Omega)$  such that
  \an{\label{e-w}   -  \Delta  w &=I_h u-u_h \qquad \hbox{in } \Omega.  } 
We assume the following full-regularity,
\an{\label{regularity} | w|_2 \le C \|I_h u-u_h\|_0.  }

\begin{theorem} 
Let $u \in H^4(\Omega) \cap H^1_0 (\Omega)$
    be the solution of the Poisson equation \eqref{e1-1}--\eqref{e1-2}. 
Let $u\in V_h $ of \eqref{V-h}
    be the solution of the finite element equation   \eqref{finite}.
Assuming the full regularity  \eqref{regularity}, it holds that    
\an{ \label{sup-2}
   \| I_h u - u_{h}\|_{0}  & \le  C h^4  | {u} |_{4} . } 
\end{theorem}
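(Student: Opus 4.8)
The plan is to use the standard Aubin--Nitsche duality argument, leveraging both the $H^1$ superconvergence just established in \eqref{sup-1} and the local $H^1$-orthogonality Lemmas \ref{l-d} (and its three companions). First I would write
\a{ \|I_h u - u_h\|_0^2 = (I_h u - u_h, -\Delta w) = (\nabla(I_h u - u_h), \nabla w), }
using \eqref{e-w} and integration by parts (both $I_h u - u_h$ and $w$ vanish on $\partial\Omega$). The natural move is to split $w = (w - I_h w) + I_h w$ and insert $I_h w$ to exploit the Galerkin orthogonality, but the cleaner route is to split $I_h u - u_h$ against $\nabla w$ and simultaneously subtract off the interpolant of $w$.

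Concretely, I would insert $u_h$ via the orthogonality \eqref{p-2}. Since $I_h w \in V_h$, we have $(\nabla(u - u_h), \nabla I_h w) = 0$, so
\a{ (\nabla(I_h u - u_h), \nabla w) = (\nabla(I_h u - u), \nabla w) + (\nabla(u - u_h), \nabla w). }
For the second term, replacing $\nabla w$ by $\nabla(w - I_h w)$ (legal by \eqref{p-2}) gives $(\nabla(u-u_h),\nabla(w-I_h w))$, which is bounded by $|u-u_h|_1 |w - I_h w|_1 \le C h^2|u|_3 \cdot C h |w|_2 = C h^3 |u|_3 |w|_2$. This is only $O(h^3)$, not good enough, so I would instead write $u - u_h = (u - I_h u) + (I_h u - u_h)$ and treat each piece: the superconvergent piece contributes $|I_h u - u_h|_1 |w - I_h w|_1 \le Ch^4|u|_4 \cdot Ch|w|_2$ by \eqref{sup-1}, giving $O(h^5)$, while the interpolation-error piece $(\nabla(u - I_h u), \nabla(w - I_h w))$ and the first term $(\nabla(I_h u - u), \nabla w)$ must both be handled by the weak orthogonality.

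The key mechanism is the optimal weak orthogonality proved via the four local $H^1$-projection lemmas: for any $v_h \in V_h$,
\a{ |(\nabla(u - I_h u), \nabla v_h)| \le C h^4 |u|_4 |v_h|_1. }
The first term $(\nabla(I_h u - u), \nabla w)$ is not of this form because $w \notin V_h$; here I would apply the weak orthogonality with $v_h = I_h w$ to capture $(\nabla(u - I_h u), \nabla I_h w)$ and separately estimate the remainder $(\nabla(u - I_h u), \nabla(w - I_h w))$ by Cauchy--Schwarz as $Ch^3|u|_4 \cdot Ch|w|_2 = O(h^4)$ — noting $|u - I_h u|_1 \le C h^3 |u|_4$ for the $P_2$ interpolation of an $H^4$ function on these meshes — wait, the sharp estimate is $|u - I_h u|_1 \le Ch^2|u|_3$, so this remainder alone is only $O(h^3)$.

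The main obstacle, and where care is essential, is precisely this remainder term $(\nabla(u - I_h u), \nabla(w - I_h w))$: a naive Cauchy--Schwarz loses a power of $h$. The resolution is to apply the weak orthogonality in its full strength rather than splitting off $I_h w$ crudely. Since $w - I_h w$ is not in $V_h$, I would instead regroup the entire expression as $(\nabla(I_h u - u), \nabla w)$ and apply the local-patch decomposition underlying the lemmas directly to $w$: expanding $I_h u - u = \sum_j c_j \phi_j$ in the global basis and using that on each patch $R_j$ the quantity $(\nabla(u - I_h u), \nabla \phi_j)_{R_j}$ vanishes for the $P_3$-part of $u$, so only the $O(h^4)$ higher-order remainder survives, now paired against $|w|_2$ through the smoothness of $w$ rather than against $|v_h|_1$. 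Combining the $O(h^4 |u|_4 |w|_2)$ bound from this term with the $O(h^5)$ superconvergent term and the full-regularity assumption \eqref{regularity}, $|w|_2 \le C\|I_h u - u_h\|_0$, I cancel one factor of $\|I_h u - u_h\|_0$ from both sides to conclude $\|I_h u - u_h\|_0 \le C h^4 |u|_4$, which is \eqref{sup-2}.
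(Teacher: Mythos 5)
Your overall strategy---Aubin--Nitsche duality combined with the discrete weak orthogonality $|(\nabla(u-I_h u),\nabla v_h)|\le Ch^4|u|_4\,|v_h|_1$ furnished by the four patch lemmas---is sound, and your decomposition produces exactly the terms that must be controlled. The gap is in how you propose to close the one term you correctly flag as the obstacle, $(\nabla(u-I_h u),\nabla(w-I_h w))$. The resolution you sketch---``expanding $I_h u-u=\sum_j c_j\phi_j$ in the global basis and using that $(\nabla(u-I_h u),\nabla\phi_j)_{R_j}$ vanishes for the $P_3$-part''---does not work: $I_h u-u$ is not an element of $V_h$, so it has no expansion in the basis $\{\phi_j\}$, and, more fundamentally, the patch lemmas give orthogonality of $\nabla(u-I_h u)$ only against gradients of the \emph{discrete} functions $\phi_j$. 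They say nothing about the pairing against $\nabla(w-I_h w)$, which has no $V_h$-component left to exploit; bounding that pairing ``through the smoothness of $w$'' would require a continuous weak orthogonality of the type \eqref{w-o}, which neither you nor the paper establishes.

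Fortunately the term you are struggling with never needs to be estimated: in your own decomposition it occurs twice with opposite signs and cancels identically. Writing $A=(\nabla(I_h u-u),\nabla w)$, $B=(\nabla(u-I_h u),\nabla(w-I_h w))$ and $C=(\nabla(I_h u-u_h),\nabla(w-I_h w))$, you have $\|I_h u-u_h\|_0^2=A+B+C$ and $A+B=(\nabla(I_h u-u),\nabla I_h w)$ exactly. That is the discrete weak orthogonality applied with $v_h=I_h w\in V_h$, hence bounded by $Ch^4|u|_4\,|I_h w|_1\le Ch^4|u|_4\|w\|_2$, while $|C|\le |I_h u-u_h|_1\,|w-I_h w|_1\le Ch^3|u|_4\cdot Ch|w|_2$ by \eqref{sup-1} (only the $O(h^3)$ rate is needed). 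Invoking \eqref{regularity} and cancelling one factor of $\|I_h u-u_h\|_0$ gives \eqref{sup-2}. Once repaired this way, your argument is actually more complete than the paper's own, which works with the discrete dual solution $w_h$ of \eqref{j1-2} in place of $I_h w$ and records in \eqref{j3} only the analogue of $C$; the companion term $(\nabla(I_h u-u),\nabla w_h)$---precisely the one your weak orthogonality controls---is what is needed for that identity to balance.
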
 
  
\begin{proof} 
Multiplying \eqref{e-w} by $(I_h u - u_{h})$ and
  doing integration by parts,  we get
\an{\label{j1} \|I_h u - u_{h}\|_0^2  &=  (  \nabla  w , \nabla(I_h u - u_{h})) .
} 
Let $ w_h\in   V_h$  be the $P_2$ finite element solution for problem
    \eqref{e-w}, satisfying 
 \an{\label{j1-2}
    ( \nabla  w_h , \nabla v_h )  &=(I_h u - u_{h},  v_h)  \qquad  \forall  v_h & \in V_h . 
    }  
By \eqref{sup-1}, we can get the quasi-optimal error bound for $w_h$ that
\an{\label{h1} |w-w_h|_1 \le |w-I_h w |_1 + |I_h w-w_h|_1 \le Ch |w|_2. }

Subtracting \eqref{j1-2} from \eqref{j1}, by the quasi-optimal error bound \eqref{h1} and
    the assumed elliptic regularity \eqref{regularity},   we get
\an{\label{j3} \ad{ \|I_h u - u_{h} \|_0^2  &=
     ( \nabla( w- w_h), \nabla(I_h u - u_{h})) \\
           & \le C h |w|_2 h^3 |u|_4 \le Ch^4 |u|_4  \|I_h u - u_{h} \|_0. } } 
After canceling one $\|I_h u - u_{h}\|_0$, \eqref{sup-2} follows \eqref{j3}.
 The proof is complete.
\end{proof}

\section{Numerical tests}\label{s-numerical}

We solve the 3D Poisson equation \eqref{e1-1}--\eqref{e1-2} 
   on a unit cube domain $\Omega=(0,1)^3$. We choose the right hand side function
     $f$ so that we have three exact solutions for the numerical test, 
\an{\label{s-1}   u&= 2^6 x (1-x) y (1-y) (z+1) (2-z) z (1-z), \\
   \label{s-2}  u&= \sin(\pi x) \sin( \pi y) \sin(2 \pi z) ,\\
    \label{s-3}  u&=2^{13} ( x  -  x^2 )^2 ( y - y^2 )^2 (2 z - 1)z(z - 1).
   }    
As required by the theory for the superconvergence,  we use
  the  uniform tetrahedral meshes shown in Figure \ref{grid3}.

\begin{figure}[H]
\begin{center}
 \setlength\unitlength{1.1pt}
    \begin{picture}(320,118)(0,3)
    \put(0,0){\begin{picture}(110,110)(0,0) \put(25,102){Grid 1:}
       \multiput(0,0)(80,0){2}{\line(0,1){80}}  \multiput(0,0)(0,80){2}{\line(1,0){80}}
       \multiput(0,80)(80,0){2}{\line(1,1){20}} \multiput(0,80)(20,20){2}{\line(1,0){80}}
       \multiput(80,0)(0,80){2}{\line(1,1){20}}  \multiput(80,0)(20,20){2}{\line(0,1){80}}
    \put(80,0){\line(-1,1){80}}\put(80,0){\line(1,5){20}}\put(80,80){\line(-3,1){60}}
      \end{picture}}
    \put(110,0){\begin{picture}(110,110)(0,0)\put(25,102){Grid 2:}
       \multiput(0,0)(40,0){3}{\line(0,1){80}}  \multiput(0,0)(0,40){3}{\line(1,0){80}}
       \multiput(0,80)(40,0){3}{\line(1,1){20}} \multiput(0,80)(10,10){3}{\line(1,0){80}}
       \multiput(80,0)(0,40){3}{\line(1,1){20}}  \multiput(80,0)(10,10){3}{\line(0,1){80}}
    \put(80,0){\line(-1,1){80}}\put(80,0){\line(1,5){20}}\put(80,80){\line(-3,1){60}}
       \multiput(40,0)(40,40){2}{\line(-1,1){40}}
        \multiput(80,40)(10,-30){2}{\line(1,5){10}}
        \multiput(40,80)(50,10){2}{\line(-3,1){30}}
      \end{picture}}
    \put(220,0){\begin{picture}(110,110)(0,0) \put(25,102){Grid 3:}
       \multiput(0,0)(20,0){5}{\line(0,1){80}}  \multiput(0,0)(0,20){5}{\line(1,0){80}}
       \multiput(0,80)(20,0){5}{\line(1,1){20}} \multiput(0,80)(5,5){5}{\line(1,0){80}}
       \multiput(80,0)(0,20){5}{\line(1,1){20}}  \multiput(80,0)(5,5){5}{\line(0,1){80}}
    \put(80,0){\line(-1,1){80}}\put(80,0){\line(1,5){20}}\put(80,80){\line(-3,1){60}}
       \multiput(40,0)(40,40){2}{\line(-1,1){40}}
        \multiput(80,40)(10,-30){2}{\line(1,5){10}}
        \multiput(40,80)(50,10){2}{\line(-3,1){30}}

       \multiput(20,0)(60,60){2}{\line(-1,1){20}}   \multiput(60,0)(20,20){2}{\line(-1,1){60}}
        \multiput(80,60)(15,-45){2}{\line(1,5){5}} \multiput(80,20)(5,-15){2}{\line(1,5){15}}
        \multiput(20,80)(75,15){2}{\line(-3,1){15}}\multiput(60,80)(25,5){2}{\line(-3,1){45}}
      \end{picture}}

    \end{picture}
    \end{center}
\vspace{0.5cm}
\caption{ The uniform tetrahedral meshes used in the computation.  }
\label{grid3}
\end{figure}
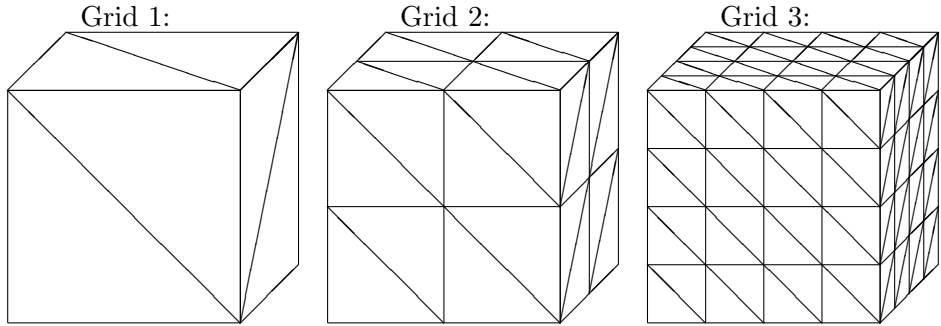

We compute first the solution \eqref{s-1} on uniform tetrahedral grids shown in Figure
   \ref{grid3}, by the $P_2$ Lagrange finite element $V_h$ defined in \eqref{V-h}.  
The results are listed in Table \ref{t-1}, where $L_3$ is the nodal $P_3$
   Lagrange interpolation defined by Figure \ref{p3n} and $L_3 u_h$ is a $P_3$ polynomial
   on each cube.
The first part of Table \ref{t-1} shows the $L^2$ and $H^1$ superconvergence of $u_h$.
The second part of Table \ref{t-1} shows the $L^2$ and $H^1$ optimal-order convergence of $u_h$.
The third part of Table \ref{t-1} shows the $L^2$ and $H^1$ optimal-order convergence of
 the lifted $P_3$ solution, $L_3 u_h$.

\begin{table}[H] 
  \centering  \renewcommand{\arraystretch}{1.1}
  \caption{Computing solutions for \eqref{s-1} on meshes shown in Figure 
      \ref{grid3}. } 
  \label{t-1}
\begin{tabular}{c|cc|cc}
\hline
$G_i$  & $\|I_h u- u_h\|_0 $ & $O(h^r)$ & $|I_h u- u_h|_{1}$ & $O(h^r)$   \\ \hline  
 2 &   0.610E-01 &1.36 &   0.576E+00 &0.82 \\
 3 &   0.604E-02 &3.34 &   0.114E+00 &2.33 \\
 4 &   0.448E-03 &3.75 &   0.175E-01 &2.71 \\
 5 &   0.298E-04 &3.91 &   0.240E-02 &2.87 \\
 6 &   0.190E-05 &3.97 &   0.313E-03 &2.94 \\
 7 &   0.120E-06 &3.99 &   0.399E-04 &2.97 \\ \hline  \hline 
$G_i$  & $\| u- u_h\|_0 $ & $O(h^r)$ & $| u- u_h|_{1}$ & $O(h^r)$   \\ \hline   
 2 &   0.120E+00 &0.00 &   0.149E+01 &0.00 \\
 3 &   0.141E-01 &3.08 &   0.418E+00 &1.84 \\
 4 &   0.165E-02 &3.09 &   0.109E+00 &1.94 \\
 5 &   0.202E-03 &3.03 &   0.276E-01 &1.98 \\
 6 &   0.251E-04 &3.01 &   0.693E-02 &1.99 \\
 7 &   0.313E-05 &3.00 &   0.173E-02 &2.00 \\ \hline  \hline 
$G_i$  & $\| u- L_3 u_h\|_0 $ & $O(h^r)$ & $| u- L_3 u_h|_{1}$ & $O(h^r)$   \\ \hline  
 2 &   0.871E-01 &0.00 &   0.523E+00 &0.00 \\
 3 &   0.208E-01 &2.06 &   0.341E+00 &0.61 \\
 4 &   0.237E-02 &3.14 &   0.748E-01 &2.19 \\
 5 &   0.194E-03 &3.61 &   0.122E-01 &2.62 \\
 6 &   0.138E-04 &3.82 &   0.172E-02 &2.82 \\
 7 &   0.916E-06 &3.91 &   0.228E-03 &2.92 \\
\hline 
    \end{tabular}%
\end{table}%

In Table \ref{t-2}, 
we compute first the solution \eqref{s-2} on uniform tetrahedral meshes shown in Figure
   \ref{grid3}, by the $P_2$ Lagrange finite element $V_h$ defined in \eqref{V-h}.  
The results in Table \ref{t-2} confirm the theory.
The convergence profile is the same as that in Table \ref{t-1}.

\begin{table}[H] 
  \centering  \renewcommand{\arraystretch}{1.1}
  \caption{Computing solutions for \eqref{s-2} on meshes shown in Figure 
      \ref{grid3}. } 
  \label{t-2}
\begin{tabular}{c|cc|cc}
\hline
$G_i$  & $\|I_h u- u_h\|_0 $ & $O(h^r)$ & $|I_h u- u_h|_{1}$ & $O(h^r)$   \\ \hline  
 2 &   0.714E-01 &0.00 &   0.716E+00 &0.00 \\
 3 &   0.103E-01 &2.80 &   0.158E+00 &2.18 \\
 4 &   0.868E-03 &3.56 &   0.263E-01 &2.59 \\
 5 &   0.599E-04 &3.86 &   0.369E-02 &2.83 \\
 6 &   0.386E-05 &3.96 &   0.483E-03 &2.94 \\
 7 &   0.243E-06 &3.99 &   0.614E-04 &2.97 \\ \hline  \hline 
$G_i$  & $\| u- u_h\|_0 $ & $O(h^r)$ & $| u- u_h|_{1}$ & $O(h^r)$   \\ \hline   
 2 &   0.126E+00 &0.00 &   0.156E+01 &0.00 \\
 3 &   0.192E-01 &2.71 &   0.447E+00 &1.81 \\
 4 &   0.216E-02 &3.16 &   0.119E+00 &1.91 \\
 5 &   0.253E-03 &3.09 &   0.305E-01 &1.96 \\
 6 &   0.310E-04 &3.03 &   0.768E-02 &1.99 \\
 7 &   0.385E-05 &3.01 &   0.192E-02 &2.00 \\ \hline  \hline 
$G_i$  & $\| u- L_3 u_h\|_0 $ & $O(h^r)$ & $| u- L_3 u_h|_{1}$ & $O(h^r)$   \\ \hline  
 2 &   0.129E+00 &0.00 &   0.773E+00 &0.00 \\
 3 &   0.228E-01 &2.50 &   0.387E+00 &1.00 \\
 4 &   0.305E-02 &2.90 &   0.989E-01 &1.97 \\
 5 &   0.260E-03 &3.55 &   0.165E-01 &2.58 \\
 6 &   0.183E-04 &3.83 &   0.232E-02 &2.84 \\
 7 &   0.120E-05 &3.93 &   0.303E-03 &2.93 \\
\hline 
    \end{tabular}%
\end{table}%

In Table \ref{t-3}, 
we compute first the solution \eqref{s-3} on uniform tetrahedral meshes shown in Figure
   \ref{grid3}, by the $P_2$ Lagrange finite element $V_h$ defined in \eqref{V-h}.  
The results in Table \ref{t-3} confirm the theory as well.

\begin{table}[ht] 
  \centering  \renewcommand{\arraystretch}{1.1}
  \caption{Computing solutions for \eqref{s-3} on meshes shown in Figure 
      \ref{grid3}. } 
  \label{t-3}
\begin{tabular}{c|cc|cc}
\hline
$G_i$  & $\|I_h u- u_h\|_0 $ & $O(h^r)$ & $|I_h u- u_h|_{1}$ & $O(h^r)$   \\ \hline  
 2 &   0.292E-01 &0.00 &   0.290E+00 &0.00 \\
 3 &   0.444E-02 &2.72 &   0.712E-01 &2.03 \\
 4 &   0.415E-03 &3.42 &   0.132E-01 &2.43 \\
 5 &   0.302E-04 &3.78 &   0.197E-02 &2.74 \\
 6 &   0.199E-05 &3.92 &   0.266E-03 &2.89 \\
 7 &   0.126E-06 &3.97 &   0.345E-04 &2.95 \\ \hline  \hline 
$G_i$  & $\| u- u_h\|_0 $ & $O(h^r)$ & $| u- u_h|_{1}$ & $O(h^r)$   \\ \hline 
 2 &   0.523E-01 &0.00 &   0.641E+00 &0.00 \\
 3 &   0.843E-02 &2.63 &   0.200E+00 &1.68 \\
 4 &   0.996E-03 &3.08 &   0.554E-01 &1.85 \\
 5 &   0.117E-03 &3.09 &   0.144E-01 &1.94 \\
 6 &   0.143E-04 &3.03 &   0.364E-02 &1.98 \\
 7 &   0.178E-05 &3.01 &   0.913E-03 &2.00 \\ \hline  \hline 
$G_i$  & $\| u- L_3 u_h\|_0 $ & $O(h^r)$ & $| u- L_3 u_h|_{1}$ & $O(h^r)$   \\ \hline  
 2 &   0.474E-01 &0.00 &   0.285E+00 &0.00 \\
 3 &   0.109E-01 &2.12 &   0.184E+00 &0.63 \\
 4 &   0.153E-02 &2.83 &   0.498E-01 &1.88 \\
 5 &   0.137E-03 &3.48 &   0.877E-02 &2.50 \\
 6 &   0.101E-04 &3.77 &   0.128E-02 &2.78 \\
 7 &   0.677E-06 &3.90 &   0.171E-03 &2.90 \\
\hline 
    \end{tabular}%
\end{table}%


\begin{thebibliography}{999} 
 

\bibitem{Andreev} A. B. Andreev, 
Error estimate of type superconvergence of the gradient for quadratic triangular elements,
C. R. Acad. Bulgare Sci. 37 (1984), no. 9, 1179--1182.


\bibitem{Andreev-Lazarov} A. B. Andreev and R. D. Lazarov, 
Superconvergence of the gradient for quadratic triangular finite elements,
Numer. Methods Partial Differential Equations 4 (1988), no. 1, 15--32.


\bibitem{Bank} R. E. Bank and J. Xu,  
Asymptotically exact a posteriori error estimators. I. Grids with superconvergence,
SIAM J. Numer. Anal. 41 (2003), no. 6, 2294--2312.

\bibitem{Brandts} J. Brandts and M. K\v{r}\'{i}\v{z}ek,
Superconvergence of tetrahedral quadratic finite elements,
 J. Comput. Math. 23 (2005), no. 1, 27--36.  
 
 
\bibitem{Goodsell} G. Goodsell and J. R. Whiteman, 
Superconvergence of recovered gradients of piecewise quadratic finite element approximations. I. $L^2$-error estimates,
Numer. Methods Partial Differential Equations 7 (1991), no. 1, 61--83.

\bibitem{Goodsell2} G. Goodsell and J. R. Whiteman, 
Superconvergence of recovered gradients of piecewise quadratic finite element approximations. II. $L^\infty$-error estimates,
Numer. Methods Partial Differential Equations 7 (1991), no. 1, 85--99.


\bibitem{Huang} Y. Huang and J. Xu, 
Superconvergence of quadratic finite elements on mildly structured grids,
 Math. Comp. 77 (2008), no. 263, 1253--1268. 

\bibitem{Liu} J. Liu, Q. Li, Z. Xiong and Q. Zhu, 
  $L^\infty$ and $L^2$-norms superconvergence of the tetrahedral quadratic finite element,
   Comput. Math. Appl. 149 (2023), 71--74.


\bibitem{Scott-Zhang} L.~R.~Scott and S.~Zhang, 
 Finite element interpolation of nonsmooth functions satisfying boundary conditions,
  Math. Comp. 54 (1990), no. 190, 483--493. 
  
\bibitem{Yang} P. Yang, Y. Li, Yonghai and X. Wang, 
 Global $L^2$ superconvergence of the tetrahedral quadratic finite element,
  Comput. Math. Appl. 133 (2023), 104--123.  


  
 
\end{thebibliography}
\end{document}